\title{Computing $\nu$-invariants of Joyce's compact $G_2$-manifolds }
\author{Christopher Scaduto \vspace{.1cm}  \\ {\emph{University of Miami, Coral Gables, FL}}\\ \texttt{c.scaduto@math.miami.edu}} 
\date{}
\definecolor{mint}{HTML}{239B56}
\pgfplotsset{compat=newest}
\newcolumntype{Y}{>{\centering\arraybackslash}X}
\newcommand{\R}{\mathbb{R}}
\newcommand{\C}{\mathbb{C}}
\newcommand{\Z}{\mathbb{Z}}
\newcommand{\cO}{\mathcal{O}}
\newcommand{\nodea}{\begin{tikzpicture}
\node at (0,0) [circle,fill=black,scale=0.7] () {};
\end{tikzpicture}}
\newcommand{\nodeb}{\begin{tikzpicture}
\node at (0,0) [circle,draw,scale=0.65] () {};
\end{tikzpicture}}
\newcommand{\nodec}{\begin{tikzpicture}
\node at (0,0) [circle,draw,fill=green,scale=0.7] () {};
\end{tikzpicture}}
\newcommand{\noded}{\begin{tikzpicture}[square/.style={regular polygon,regular polygon sides=3}]
\node at (0,0) [square,draw,fill=cyan,scale=0.45] () {};
\end{tikzpicture}}
\newcommand{\nodee}{\begin{tikzpicture}[triangle/.style={regular polygon,regular polygon sides=4}]
\node at (0,0) [triangle,draw,fill=yellow,scale=0.7] () {};
\end{tikzpicture}}
\newcommand{\nodef}{\begin{tikzpicture}[pentagon/.style={regular polygon,regular polygon sides=5}]
\node at (0,0) [pentagon,draw,fill=red,scale=0.7] () {};
\end{tikzpicture}}
\newtheorem{theorem}{Theorem}[section]
\newtheorem{prop}[theorem]{Proposition}
\newtheorem{corollary}[theorem]{Corollary}
\newtheorem{remark}[theorem]{Remark}
\newtheorem{hypothesis}[theorem]{Hypothesis}
\newtheorem{situation}[theorem]{Situation}
\begin{document}

\maketitle

\vspace{-.4cm}
\begin{abstract}
    Crowley and Nordstr\"{o}m introduced an invariant of $G_2$-structures on the tangent bundle of a closed 7-manifold, taking values in the integers modulo 48. Using the spectral description of this invariant due to Crowley, Goette and Nordstr\"{o}m, we compute it for many of the closed torsion-free $G_2$-manifolds defined by Joyce's generalized Kummer construction.
\end{abstract}

\vspace{.2cm}

\section{Introduction}
In \cite{cn-newinvs}, Crowley and Nordstr\"{o}m introduced an invariant $\nu(M,\phi)\in\Z/48$ for a closed 7-manifold $M$ equipped with a $G_2$-structure $\phi$ on its tangent bundle, invariant under homotopies of $\phi$. In the case that $M$ has a metric $g$ with holonomy contained in $G_2$ and associated $G_2$-structure $\phi_g$, Crowley, Goette and Nordstr\"{o}m \cite{cgn} showed that this invariant has the spectral description
\[
    \nu(M,\phi_g) \equiv 3\eta(B_M) - 24\eta(D_M) + 24(1+b_1(M)) \mod 48
\]
Here $\eta(B_M)$ is the $\eta$-invariant of the odd signature operator $B_M$ for the Riemannian manifold $(M,g)$, and $D_M$ is the associated spin Dirac operator. In fact, the authors show that
\[
    \bar{\nu}(M,g) := 3\eta(B_M) - 24\eta(D_M) \in \Z
\]
is an invariant of the torsion-free $G_2$-manifold $(M,g)$ which is locally constant on the moduli space of metrics on $M$ with holonomy contained in $G_2$.

There are only a handful of methods available to construct closed $G_2$-manifolds. The first is the generalized Kummer construction of Joyce \cite{joyce-g2, joyce-book}. There is also the twisted connected sum method of Kovalev \cite{kovalev}, generalized by Corti--Haskins--Nordstr\"{o}m--Pacini \cite{chnp,chnp-weak}. A further generalization, that of ``extra-twisted'' connected sums, is considered by Crowley--Goette--Nordstr\"{o}m \cite{cgn}; see also \cite{nordstrom}. More recently, Joyce and Karigiannis \cite{jk} introduced another construction, the input of which is a closed $G_2$-manifold with an involution.

Crowley and Nordstr\"{o}m \cite[Theorem 1.7]{cn-newinvs} show $\nu\equiv 24$ (mod 48) for twisted connected sums. This is refined by Crowley--Goette--Nordstr\"{o}m \cite[Corollary 3]{cgn}, who show $\bar{\nu}=0$ for these $G_2$-manifolds. More generally, the authors compute $\bar{\nu}$ for extra-twisted connected sums, \cite[Theorem 1]{cgn}, producing many examples with non-vanishing $\bar{\nu}$. 

Here we compute the $\nu$-invariants for many of Joyce's original $G_2$-manifolds constructed in \cite{joyce-g2}. Our investigation is by no means complete, and we largely focus on how a few observations allow one to use well-known ``soft'' properties of $\eta$-invariants in this setting. A more detailed undertaking may allow for the computation of the integer-valued $\overline{\nu}$-invariants.

The construction of a Joyce manifold $(M,g)$ involves taking the resolution of orbifold singularities in the quotient $\cO=T^7/\Gamma$ of a 7-torus $T^7$ by a finite group action $\Gamma$ preserving the flat $G_2$-structure. Many of the examples considered in \cite{joyce-g2} satisfy the following.

\begin{hypothesis}\label{hyp}
    Each connected component of the singular set in $\cO=T^7/\Gamma$ has a neighborhood isometric, for some finite subgroup $G\subset SU(2)$, to a neighborhood of the singular set in the orbifold
\begin{equation}
    T^3\times \C^{2}/G\label{eq:nbhds1}
\end{equation}
\end{hypothesis}

\noindent In general we write $T^k$ for any quotient of $\R^k$ by a discrete full rank sublattice, not necessarily $\Z^k$. When the singular set is nice enough, the $\nu$-invariant of the resulting torsion-free $G_2$-manifold $M$ may be computed from invariants of the orbifold $\cO$.

\begin{theorem}\label{thm:nuorb}
    Let $(M,g)$ be a compact $G_2$-manifold obtained from the generalized Kummer construction of Joyce using a flat orbifold $\cO=T^7/\Gamma$ satisfying Hypothesis \ref{hyp}. Then
    \begin{equation}
        \nu(M,\phi_g) \equiv 3\eta(B_{\cO})-24\eta(D_{\cO}) + 24(1+b_1(\cO)) \mod 48\label{eq:nuthm}
    \end{equation}
\end{theorem}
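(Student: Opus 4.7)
The plan is to compare $M$ and $\cO$ locally near each singular component, where they differ only by replacing a neighborhood $T^3 \times \C^2/G$ (for some finite $G \subset SU(2)$) by its Joyce resolution $T^3 \times X_G$, where $X_G \to \C^2/G$ is the minimal hyperk\"ahler ALE resolution. Since the spectral formula for $\nu(M,\phi_g)$ is already in hand, it suffices to establish (a) $b_1(M) = b_1(\cO)$, and (b) $3\eta(B_M) - 24\eta(D_M) \equiv 3\eta(B_\cO) - 24\eta(D_\cO) \pmod{48}$.

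For (a), I would apply Mayer--Vietoris. The singular neighborhood $T^3 \times \C^2/G$ deformation retracts to $T^3$, so its $H^1$ is $\Z^3$. The Joyce resolution $T^3 \times X_G$ has $H^1 = \Z^3$ since $X_G$ is simply connected, and the common link $T^3 \times (S^3/G)$ also has $H^1 = \Z^3$ since $\pi_1(S^3/G) = G$ is finite. Since the first cohomology groups of the local pieces and their intersections agree on the two sides of the construction, a direct Mayer--Vietoris comparison gives $H^1(M) \cong H^1(\cO)$, so the terms $24(1+b_1)$ in \eqref{eq:nuthm} match exactly.

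For (b), I would cut $M$ and $\cO$ along the common hypersurface $Y = \bigsqcup_i T^3 \times (S^3/G_i)$. An APS-style gluing formula for $\eta$-invariants then yields
\begin{align*}
\eta(B_M) - \eta(B_\cO) &= \sum_i \bigl[\eta^{\mathrm{APS}}(B_{T^3\times X_{G_i}}) - \eta^{\mathrm{APS}}(B_{T^3\times \C^2/G_i})\bigr],\\
\eta(D_M) - \eta(D_\cO) &= \sum_i \bigl[\eta^{\mathrm{APS}}(D_{T^3\times X_{G_i}}) - \eta^{\mathrm{APS}}(D_{T^3\times \C^2/G_i})\bigr],
\end{align*}
since the contributions from the common complement $\cO \setminus \bigsqcup (T^3 \times \C^2/G_i) = M \setminus \bigsqcup (T^3 \times X_{G_i})$ cancel, as do the universal boundary corrections since the collar data along $Y$ is the same on both sides. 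Using product formulas for $\eta$-invariants on manifolds of the form $T^3 \times W^4$ with flat $T^3$-factor, each summand reduces to a spectral invariant of operators on the 4-manifold pair $(X_G, \C^2/G)$.

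The main obstacle is then verifying, for each finite subgroup $G \subset SU(2)$ arising in Joyce's orbifolds, the modular identity
\[
3\bigl[\eta^{\mathrm{APS}}(B_{T^3 \times X_G}) - \eta^{\mathrm{APS}}(B_{T^3 \times \C^2/G})\bigr] - 24\bigl[\eta^{\mathrm{APS}}(D_{T^3 \times X_G}) - \eta^{\mathrm{APS}}(D_{T^3 \times \C^2/G})\bigr] \equiv 0 \pmod{48}.
\]
I would attack this case by case over the ADE classification, using the topology of the Kleinian resolutions ($X_G$ simply connected with negative-definite intersection form of rank $|\mathrm{Irr}(G)| - 1$) together with equivariant index computations on $\C^2$ \`a la Atiyah--Singer. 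The fact that $\bar\nu$ is itself a $G_2$-topological invariant should impose universal relations that reduce the check to a handful of base cases, beginning with the Eguchi--Hanson resolution for $G = \Z/2$.
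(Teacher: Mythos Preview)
Your proposal has a genuine gap at the outset of (b). You invoke the spectral formula for the torsion-free metric $g$ and then try to compute $\eta(B_{(M,g)})$, $\eta(D_{(M,g)})$ by cutting along $Y$, treating the local pieces as Riemannian products $T^3\times X_G$. But for the torsion-free metric they are \emph{not} products: Joyce's existence theorem perturbs the approximate metric $g^t$ globally, so the piece you excise from $(M,g)$ is not isometric to any $T^3\times X_G^\circ$. Moreover neither $g^t$ nor the flat orbifold metric $g^0$ has a product collar near $Y$ (the cone $dr^2+r^2g_{S^3/G}$ is not a cylinder), so the APS gluing formula does not apply as stated. The paper handles both issues. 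It works with $g^t$ rather than $g$, shows that the Mathai--Quillen defect in $\overline{\nu}(M,g^t,s^t)$ vanishes because $(M,g^t)$ is ``torsion-free up to flat factors'', and returns to $g$ via homotopy invariance of $\nu$; separately it introduces the notion of flat factor equivalence to modify $g^t$ and $g^0$ into metrics with honest product collars without changing the $\eta$-invariants. Your outline skips both steps.

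Your ADE case-check also misses a much simpler uniform argument. Each local piece $T^3\times X_G^\circ$ and $T^3\times D^4/G$ carries an orientation-reversing spin isometry coming from the $T^3$ factor, so its relative $\eta$-invariant vanishes outright; for the signature operator one must then still verify that the two Maslov indices agree, which requires computing $\text{im}(H^3(X_-)\to H^3(Y))$ on each side---these are not ``universal boundary corrections'' that cancel automatically, since they depend on the topology of $X_-$, not just on the collar. For the Dirac operator, K\"unneth gives $h_{\text{APS}}(D_{T^3\times V})=h(D_{T^3})\cdot h_{\text{APS}}(D_V)=2\,h_{\text{APS}}(D_V)$, hence $\xi_{\text{APS}}\in\Z$ for every finite $G\subset SU(2)$, and no ADE computation is needed. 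Working with $\xi(D)$ modulo $\Z$ rather than $\eta(D)$ also absorbs the $b_1$ term via $h(D)=1+b_1$ on both $M$ and $\cO$, so your Mayer--Vietoris step (a), while correct, becomes unnecessary.
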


\noindent We will see that this result also holds under weaker conditions than those imposed by Hypothesis \ref{hyp}; see Proposition \ref{prop:nuorbext}. The right-hand side of \eqref{eq:nuthm} is straightforward to compute. The odd signature $\eta$-invariant of the orbifold $\cO=T^7/\Gamma$ may be identified with the evaluation at $s=0$ of
\begin{equation}
    \eta(B_\cO)(s) = \sum_{\lambda\neq 0}  \text{sign}(\lambda)\cdot \dim (E_\lambda^{\Gamma})\cdot |\lambda|^{-s} \label{eq:orbeta}
\end{equation}
where $\lambda$ ranges over the non-zero eigenvalues of the odd signature operator for $T^7$ with corresponding eigenspaces $E_\lambda$ and $\Gamma$-invariant subspaces $E_\lambda^\Gamma \subset E_\lambda$. In particular, we have
\begin{equation}
    \eta(B_\cO) = \frac{1}{|\Gamma|}\sum_{\gamma\in \Gamma}  \eta_\gamma(B_{T^7})\label{eq:equiveta}
\end{equation}
where $\eta_\gamma(B_{T^7})$ is the equivariant $\eta$-invariant, obtained from \eqref{eq:orbeta} by replacing $\dim(E_\lambda^\Gamma)$ with $\text{Tr}(\gamma|E_\lambda)$ and evaluating at $s=0$. Similar remarks hold for the $\eta$-invariant of the spin Dirac operator. We then compute these equivariant $\eta$-invariants using standard techniques, as in \cite{aps-ii,donnelly,mp-dirac}. We compute $\nu$ (mod $24$) for all examples in Joyce's original papers \cite{joyce-g2}, and for a majority of these we compute $\nu$ (mod $48$). Some results from our computations are:
\begin{itemize}
    \item The two simply-connected torsion-free $G_2$-manifolds with $b_2=2$ and $b_3=10$ constructed by Joyce in \cite{joyce-g2} have distinct $\nu$ invariants.
    \item For almost all examples considered, we compute $\nu\equiv 0$ (mod $24$).
    \item For all of the examples considered, we compute $\nu \equiv 0$ (mod 3).
\end{itemize}
For general remarks on the possible values for $\nu$, see e.g. \cite{cgn2}. The range of values $\nu$ takes for the examples in \cite{joyce-g2} is small; see Table \ref{table} and Figure \ref{figure:nu}. We expect that further computations, obtained in part by relaxing our assumptions on the singular set, will lead to a greater range of values. Indeed, our constraints on the singular set of $\cO$ are not necessarily optimal for Theorem \ref{thm:nuorb}, and are mainly imposed by our methods. 

The proof of Theorem \ref{thm:nuorb} compares the invariants for the orbifold $\cO$ and its resolution $G_2$-manifold through gluing formulae for $\eta$-invariants as described in \cite{bunke,kirklesch}; such formulae were used in \cite{cgn}. However, our situation does not fit the hypotheses of these formulae: our gluing region is not isometric to an interval times a hypersurface. The central observation is that the invariants under consideration are insensitive to regions of the geometry locally isometric to a product in which one factor is a flat manifold, as is the case near the singularities in $\cO$. We may then modify the geometry in these regions to satisfy the hypotheses of the gluing formulae.

A more careful analysis of the behavior of $\eta$-invariants under resolutions of orbifold singularities should lift Theorem \ref{thm:nuorb} to the integers, and compute $\overline{\nu}$. This lift is achieved in the current article for $\eta$-invariants of the odd signature operator; the case of the spin Dirac operator is more delicate. 

In the final section of the paper, we show how some of our computations can be recovered by decomposing Joyce's orbifolds along a flat 6-torus, similar to the decompositions of twisted connected sums.


Finally, we mention that a more analytical approach to computing $\overline{\nu}$ for Joyce's manifolds was studied in the PhD work of Nelvis Fornasin \cite{nelvis}.\\

\textbf{Acknowledgments.} The author would like to thank Simon Donaldson for his support and encouragement, as well as Nelvis Fornasin, Sebastian Goette and Johannes Nordstr\"{o}m for fruitful discussions. The ``twisted connected sum'' type decompositions in Section \ref{sec:tcs} were pointed out to the author by Sebastian Goette and Johannes Nordstr\"{o}m. The author was supported by the Simons Collaboration Grant on {\emph{Special Holonomy in Geometry, Analysis and Physics}}.

\section{Hypotheses on the singular set}
We first discuss Hypothesis \ref{hyp} in the general context of Joyce's construction as described in \cite[Chapter 11]{joyce-book}. Let $\Lambda$ be a lattice in $\R^7$, isomorphic as an abelian group to $\Z^7$. Then the quotient $T^7=\R^7/\Lambda$ is a 7-torus. Every point $x\in T^7$ may be written as $v+\Lambda$ for some $v\in \R^7$ and every tangent space $T_x T^7$ is naturally isomorphic to $\R^7$. The flat Euclidean $G_2$-structure on $\R^7$, described for example by the 3-form \eqref{eq:phi}, descends to a flat $G_2$-structure on $T^7$.

Let $\Gamma$ be a finite group acting on $T^7$ preserving its $G_2$-structure. Then $\cO=T^7/\Gamma$ is a flat orbifold, with an inherited flat orbifold metric $g^0$. For a subgroup $A\subset \Gamma$ let $\text{Fix}(A)$ denote the fixed points of $A$. Let $\mathcal{L}$ be the set of $F\subset T^7$ such that $F$ is a connected component of $\text{Fix}(A)$ for some subgroup $A\subset \Gamma$. Write $\mathcal{L}=\{F_i\}_{i\in I}$ where $0\in I$ is the index such that $F_0=T^7$. Then 
\[
    \text{Sing}(\cO) = \bigcup_{i\in I\setminus \{0\}} F_i\bigg/\Gamma \subset \cO
\]
forms the singular set of $\cO$. From \cite[Proposition 11.1.3]{joyce-book}, each $F_i$ with $i\neq 0$ is either a 1-torus or a 3-torus. In general, we may have two distinct 3-tori $F_i$ and $F_j$ that intersect in a 1-torus $F_k$.

The normalizer $N(F)$ of a subset $F\subset T^7$ is the subgroup of $\gamma\in \Gamma$ such that $\gamma F=F$, and the centralizer $C(F)$ is the subgroup of $\gamma\in \Gamma$ that act as the identity on $F$. For $F_i\in \mathcal{L}$ define $A_i=C(F_i)$ and $B_i=N(F_i)/C(F_i)$. Then $F_i\subset \text{Fix}(A_i)$. As $\gamma F_i$ is a component of $\text{Fix}(\gamma A_i \gamma^{-1} )$, there is an index denoted $\gamma\cdot i\in I$ with $\gamma F_i=F_{\gamma\cdot i}$. The part of $\text{Sing}(\cO)$ coming from $F_i$ is
\[
    \Gamma F_i / \Gamma = \bigcup_{\gamma\in \Gamma} F_{\gamma\cdot i} \bigg/\Gamma
\]
Now suppose $F_i\cap F_{\gamma\cdot i}=\emptyset$ for each $\gamma\in \Gamma$ such that $\gamma\cdot i \neq i$. Then $\cO$ is isomorphic near $\Gamma F_i / \Gamma$ to
\begin{equation}
    Z_i := (F_i\times W_i/A_i)/B_i \label{eq:singnbhd}
\end{equation}
Here, if $V_i$ is the invariant subspace of the action of $A_i$ lifted to $\R^7$, then $W_i$ is the orthogonal complement of $V_i$. If $B_i$ acts freely on $F_i$ then the singular locus of \eqref{eq:singnbhd} is the image of $F_i$. By \cite[Proposition 11.1.3]{joyce-book}, $W_i/A_i$ is of the form $\C^2/G$ or $\C^3/G$ for $G$ some subgroup of $SU(2)$ or $SU(3)$. Thus when $B_i$ is trivial and $\dim F_i=3$, the neighborhood \eqref{eq:singnbhd} recovers the description \eqref{eq:nbhds1}. Most of our arguments go through under the following weakening of Hypothesis \ref{hyp}:

\begin{hypothesis}\label{hyp2}
    Each connected component of the singular set in $\cO=T^7/\Gamma$ has a neighborhood isometric to a neighborhood of the singular set in the orbifold
\begin{equation}
    (T^{7-2n}\times \C^{n}/G)/B\label{eq:nbhds}
\end{equation}
where $n\in\{2,3\}$, $G$ is a finite subgroup of $SU(2)$ or $SU(3)$, the action of $B$ preserves the two factors and acts freely on the torus. Furthermore, the torus $T^{7-2n}$ admits an orientation-reversing isometry $\tau$ such that $\tau\times \text{{\emph{id}}}$ descends to define an isometry of \eqref{eq:nbhds}.
\end{hypothesis}

Hypothesis \ref{hyp2} is satisfied if: (i) $F_i\cap F_{\gamma\cdot i}=\emptyset$ whenever $\gamma\cdot i \neq i$; (ii) each $B_i$ acts freely on $F_i$; and (iii) each $F_i$ admits an orientation-reversing isometry, which by extension to the identity on $W_i/A_i$ induces an orientation-reversing isometry of $Z_i$. If $B_i$ is trivial, then (iii) automatically holds. If $B$ is $\Z/2$ and $F_i$ is a 1-torus then (iii) also holds.

\section{Flexibility of metric}
Let $(X,g)$ be a Riemannian manifold. Suppose for some open subset $U\subset X$ there is an isometry $\phi_U:(F,g_F)\times (V, g_V)\to (U,g|_U)$, where $(F,g_F)$ is flat and of dimension $\geqslant 1$, and $(V, g_V)$ is any Riemannian manifold. We say a metric $h$ on $X$ is related to $g$ by a {\emph{flat factor move}} if $g|_{X\setminus U}=h|_{X\setminus U}$ and $\phi_U^\ast(h|_U)=g_F + h_V$ for some open set $U\subset X$, isometry $\phi_U$ as above, and metric $h_V$ on $V$. We say $g$ and $h$ are {\emph{flat factor equivalent}} if they are related by a sequence of flat factor moves. Observe that it suffices in the definition to consider $(F,g_F)=(I,dt^2)$ for interals $I\subset \R$.

It was observed in \cite{aps-ii} that the odd signature $\eta$-invariant is invariant under conformal changes of the metric, and that the same is true, modulo $\Z$, for the reduced $\eta$-invariant of the spin Dirac type operator. Here the {\emph{reduced}} $\eta$-invariant of an operator $D$ is defined by
\[
    \xi(D) = \frac{1}{2}\left(\eta(D) + \dim \text{ker}(D) \right)
\]
The argument used there may be adapted to show the following:

\begin{prop}\label{prop:flatprod}
    Let $X$ be a closed oriented odd-dimensional manifold, with flat factor equivalent metrics $g$ and $h$. The odd signature $\eta$-invariants for $(X,g)$ and $(X,h)$ are equal. If $X$ is given a spin structure, the same is true for the reduced spin Dirac $\eta$-invariants taken modulo $\Z$.
\end{prop}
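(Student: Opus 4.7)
The plan is to reduce to a single flat factor move, then apply the Atiyah--Patodi--Singer index theorem on the mapping cylinder, mimicking the conformal invariance argument in \cite{aps-ii}. Since flat factor equivalence is generated by single moves, assume $g$ and $h$ coincide outside an open set $U$ on which an isometry $\phi_U \colon (F, g_F) \times (V, g_V) \to (U, g|_U)$ is fixed, with $\phi_U^*(h|_U) = g_F + h_V$ and $\dim F \geq 1$.

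First I would pick a smooth path $h_V^t$ from $g_V$ to $h_V$ and assemble the family $g_t$ on $X$ with $g_t|_U = g_F + h_V^t$ under $\phi_U$ and $g_t = g$ outside $U$. Equip $W := X \times [0,1]$ with the metric $G := g_{\rho(t)} + dt^2$, where $\rho \colon [0,1] \to [0,1]$ is smooth, surjective, and constant near the endpoints, so $G$ is cylindrical near $\partial W = X_1 \sqcup (-X_0)$ and the APS theorem applies to give
\[
\eta(B_h) - \eta(B_g) = \int_W L(R_G), \qquad \xi(D_h) - \xi(D_g) \equiv \int_W \hat A(R_G) \pmod{\Z},
\]
using signature vanishing of the cylinder (as $W$ retracts to the odd-dimensional $X$) for the first equation and integrality of the APS index for the second.

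The decisive step is the pointwise vanishing of both integrands on $W$. Off $U \times [0,1]$, $G = g \oplus dt^2$ is a Riemannian product with a flat $[0,1]$ factor, so by multiplicativity $L(R_G) = L(R_g) \wedge 1$ is pulled back from $X \setminus U$ of dimension $\dim W - 1$ and thus carries no top-degree component. On $U \times [0,1]$ the metric is the product $(F, g_F) \times (V \times [0,1], h_V^{\rho(t)} + dt^2)$; multiplicativity combined with $L(R_{g_F}) = 1$ (since $g_F$ is flat) shows $L(R_G)$ is pulled back from $V \times [0,1]$ of dimension $\dim V + 1 < \dim W$ (using $\dim F \geq 1$), again killing the top-degree piece. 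The identical argument applies to $\hat A(R_G)$, and the conclusions of the proposition follow.

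The main obstacle is this pointwise vanishing on $W$, which rests on multiplicativity of $L$ and $\hat A$ under Riemannian products and the fact that a positive-dimensional flat factor contributes only its degree-zero component. Secondary technicalities (orientation and sign conventions on $\partial W$, smoothness of the interpolation, the cylindrical collar profile $\rho$, and treatment of the $(4k+1)$-dimensional case where the signature operator has symmetric spectrum) are standard and do not affect the argument.
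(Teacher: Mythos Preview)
Your proposal is correct and follows essentially the same route as the paper: reduce to a single flat factor move, build a cylinder $[0,1]\times X$ with an interpolating metric that is cylindrical near the ends, apply the APS index theorem, and observe that the $L$- (resp.\ $\widehat A$-) integrand has no top-degree part because on $(X\setminus U)\times[0,1]$ the metric has the flat $[0,1]$ factor while on $U\times[0,1]$ it has the flat $(F,g_F)$ factor. The only cosmetic difference is that the paper uses the specific convex interpolation $\lambda(s)h+(1-\lambda(s))g$ rather than a general path $h_V^t$; this automatically handles the smooth matching at $\partial U$ that you flag as a secondary technicality.
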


\begin{proof}
    It suffices to prove the claim when $h$ and $g$ are related by a flat factor move. Equip $[0,1]\times X$ with the metric $G=ds^2+ \lambda(s)h + (1-\lambda(s))g$ where $\lambda:[0,1]\to \R$ is a bump function equal to $0$ for $s\in [0,1/4]$ and equal to $1$ for $s\in [3/4,1]$. Write $B_{(M,g)}$ for the odd signature operator defined using the metric $g$. Then by the Atiyah-Patodi-Singer theorem \cite[Theorem 4.14]{aps-i} we have
    \[
        \eta(B_{(M,h)})- \eta(B_{(M,g)}) = \int_{[0,1]\times X} L(p(G))
    \]
    where $L(p(G))$ is the Hirzebruch $L$-polynomial applied to the Pontryagin forms of the Riemannian metric $G$ on $[0,1]\times X$. We use the following elementary property: the top degree term of $L(p(G))$ vanishes if the metric $G$ is a non-trivial product metric with one of its factors a flat metric. On $X\setminus U$ the metrics $g$ and $h$ are equal, and thus $G=ds^2+g|_{X\setminus U}$ has the flat factor $([0,1],ds^2)$. This implies that the integral of $L(p(G))$ over ${[0,1]\times (X\setminus U)}$ vanishes. Next,
    \[
        \int_{[0,1]\times U} L(p(G))= \int_{[0,1]\times F\times V} L(p(ds^2+g_F+\lambda(s)h_V+(1-\lambda(s))g_V))=0,
    \]
    as the metric appearing on the right, equal to $(\text{id}\times \phi_U)^\ast(G)$, has the flat factor $(F,g_F)$. The claim for the reduced $\eta$-invariant of the spin Dirac operator follows the same argument, using \cite[Theorem 4.2]{aps-i}, with the $\widehat{A}$-polynomial in place of $L$. In this latter case, the index of the Dirac operator on $[0,1]\times X$ is not a topological invariant, and so the result holds only modulo $\Z$.
\end{proof}

\noindent In conclusion, the quantities $\eta(B_M)$ and $\xi(D_M)$ (mod $\Z$) are invariants of the equivalence class of a metric generated by conformal changes and flat factor equivalences. Note that these two notions are distinct. For example, $T^2\times (S^2\setminus \text{pt})$ and $T^2\times \R^2$ with their standard metrics are flat factor equivalent but not conformally equivalent.

\section{Comparison of odd signature $\eta$-invariants}\label{sec:oddsign}
Starting from a flat $G_2$-orbifold, the construction of Joyce proceeds in two steps. First, a smooth closed Riemannian 7-manifold $(M,g^t)$ with a closed $G_2$-structure $\phi^t$ is constructed by choosing resolutions for the singularities and pasting structures together using a partition of unity. Then he shows that $g^t$ and $\phi^t$ may be deformed into a torsion-free $G_2$-structure on $M$. In this section we compare the odd signature $\eta$-invariants of the flat orbifold $(\cO,g^0)$ and $(M,g^t)$.

\begin{remark}
The parameter $t$ is any small positive real number, and roughly represents the size of the glued-in resolution pieces. However, this will only be important in Section \ref{sec:g2structure}.
\end{remark}

We first recall the relevant aspects of Joyce's construction from \cite{joyce-g2, joyce-book}. We assume Hypothesis \ref{hyp2}. We denote by $Z_i^\circ= (F_i\times D_i/A_i)/B_i$ the compact manifold with boundary obtained from $Z_i = (F_i\times W_i/A_i)/B_i$ of \eqref{eq:singnbhd} by restricting to a small closed ball $D_i\subset W_i$ centered at the origin. Let $M^\circ$ be obtained from $\cO$ by deleting neighborhoods of the singular set corresponding to the $Z_i^\circ$ and taking the closure. Then we have the decomposition
\begin{equation}
    \cO  = M^\circ\cup \bigcup_{\Gamma i \in I/\Gamma} Z_i^\circ \label{eq:odecomp}
\end{equation}
Let $i\in I$ be such that $F_i$ is a 3-torus. Choose an ALE Riemannian 4-manifold $X_i$ with holonomy $SU(2)$ which is asymptotic to $W_i/A_i$, and a free isometric action of $B_i$ on $X_i$ such that $(F_i\times X_i)/B_i$ is asymptotic to $Z_i$ in the sense of \cite[Definition 11.2.2]{joyce-book}. In particular, $X_i$ is a non-compact 4-manifold with one end, which near infinity metrically resembles the end of $W_i/A_i$. When $F_i$ is a 1-torus, we instead choose $X_i$ to be a Quasi-ALE 6-manifold with holonomy $SU(3)$ as in \cite[Chapter 9]{joyce-book}. We choose such data for each orbit $\Gamma i$. Set $M_{i} = (F_i\times X_i)/B_i$. Then $M_{i}$ is a smooth 7-manifold with one end, which may be truncated to obtain a 7-manifold with boundary, denoted $M^\circ_{i}$. The resolution manifold $M$ is then topologically
\begin{equation}
    M =  M^\circ\cup \bigcup_{\Gamma i \in I/\Gamma} M_i^\circ \label{eq:mdecomp}
\end{equation}
The decompositions \eqref{eq:odecomp} and \eqref{eq:mdecomp} are schematically depicted in Figure \ref{fig:orbifolddecomp}; there, $\cO$ is replaced by a 2-dimensional orbifold with 4 singular points, and $M$ is a corresponding resolution. Write $N_i=\partial Z_i^\circ=\partial M_i^\circ$ and $N =  \bigcup N_i=\partial M^\circ $. For small $\varepsilon$ and $t\in (0,\varepsilon]$, a closed $G_2$-structure $\phi^t$ and Riemannian metric $g^t$ on $M$ are constructed in \cite{joyce-g2, joyce-book} by patching together the torsion-free $G_2$-structures on the different pieces using a partition of unity. 

\begin{figure}[t]
\centering
\begin{tikzpicture}[scale=0.9]
\node[anchor=south west,inner sep=0] at (0,0) {\includegraphics[scale=0.675]{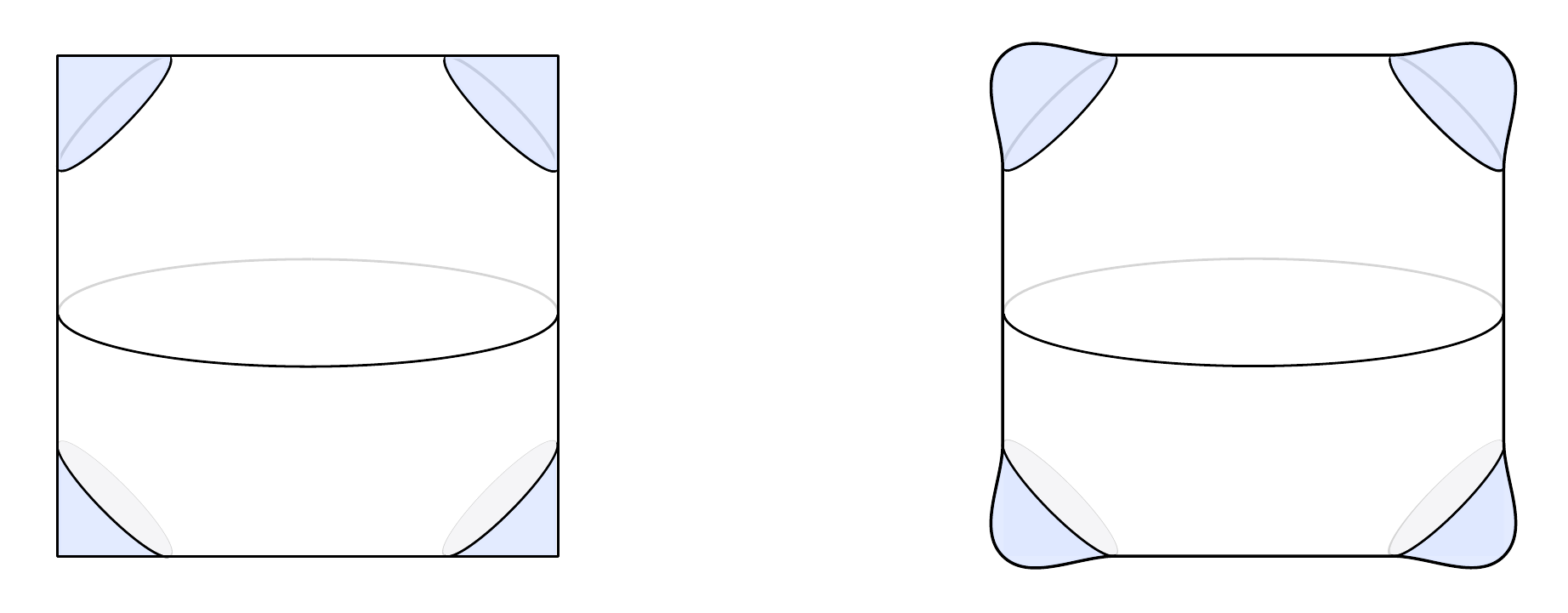}};
\node[label=above right:{$\cO$}] at (-0.75,2){};
\node[label=above right:{$M^\circ$}] at (2.25,3.5){};
\node[label=above right:{$Z_3^\circ$}] at (-0.5,-0.25){};
\node[label=above right:{$Z_2^\circ$}] at (-0.5,4.75){};
\node[label=above right:{$Z_1^\circ$}] at (5,4.75){};
\node[label=above right:{$Z_4^\circ$}] at (5,-0.25){};
\node[label=above right:{$M$}] at (13.95,2){};
\node[label=above right:{$M^\circ$}] at (10.75,3.5){};
\node[label=above right:{$M_3^\circ$}] at (7.75,-0.25){};
\node[label=above right:{$M_2^\circ$}] at (7.75,4.75){};
\node[label=above right:{$M_1^\circ$}] at (13.45,4.75){};
\node[label=above right:{$M_4^\circ$}] at (13.45,-0.25){};
\end{tikzpicture}
\caption{Schematic decompositions of $\cO$ and $M$.}\label{fig:orbifolddecomp}
\end{figure}

\begin{prop}\label{prop:etaequal}
    If Hypothesis \ref{hyp2} holds, then $\eta(B_{(M,g^t)})=\eta(B_{(\cO,g^0)})$.
\end{prop}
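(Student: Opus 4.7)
The plan is to apply the gluing formulas for $\eta$-invariants of \cite{bunke,kirklesch} to the parallel decompositions \eqref{eq:odecomp} and \eqref{eq:mdecomp}, thereby localizing the difference $\eta(B_\cO)-\eta(B_M)$ to contributions from the singular and resolved pieces $Z_i^\circ$ and $M_i^\circ$. The gluing formulas require a cylindrical collar near each $N_i$, which neither $g^0$ nor $g^t$ satisfies a priori. However, on a neighborhood of $N_i$ both metrics are locally of the form $g_{F_i} + g_V$ with $F_i$ flat (descending to the $B_i$-quotient), so replacing the transverse metric $g_V$ by one with a cylindrical end of length $1$ near $N_i$ is a flat factor move in the sense of Proposition \ref{prop:flatprod}. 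That proposition guarantees the odd signature $\eta$-invariant is insensitive to such a modification. I would therefore begin by replacing $g^0$ and $g^t$ by flat factor equivalent metrics which, in a neighborhood of each $N_i$, take the cylindrical form $dr^2 + h_{N_i}$ for a common cross-sectional metric $h_{N_i}$.

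With matched cylindrical collars in place, the Bunke--Kirk-Lesch formulas applied to \eqref{eq:odecomp} and \eqref{eq:mdecomp} give
\begin{equation*}
\eta(B_\cO) = \eta^{\mathrm{APS}}(B_{M^\circ}) + \sum_i \eta^{\mathrm{APS}}(B_{Z_i^\circ}) + K,\qquad \eta(B_M) = \eta^{\mathrm{APS}}(B_{M^\circ}) + \sum_i \eta^{\mathrm{APS}}(B_{M_i^\circ}) + K,
\end{equation*}
with identical APS boundary conditions and the same correction $K$, since $K$ is determined entirely by the tangential boundary operator on $N = \bigsqcup_i N_i$. Subtracting yields
\begin{equation*}
\eta(B_\cO) - \eta(B_M) = \sum_i \bigl[\eta^{\mathrm{APS}}(B_{Z_i^\circ}) - \eta^{\mathrm{APS}}(B_{M_i^\circ})\bigr].
\end{equation*}

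To kill each summand I would form the closed $7$-manifold $W_i = Z_i^\circ \cup_{N_i} (-M_i^\circ)$ and exploit its symmetry. By Hypothesis \ref{hyp2} the orientation-reversing isometry $\tau$ of $F_i$ is $B_i$-equivariant, so $\tau \times \mathrm{id}$ descends to an orientation-reversing isometry of each of $Z_i^\circ$ and $M_i^\circ$ (orientation-reversing because $\tau$ acts on the odd-dimensional factor $F_i$), and on the common cylindrical collar of $N_i$ the two restrictions coincide. They glue to an orientation-reversing self-isometry of $W_i$, forcing $\eta(B_{W_i}) = 0$. A third application of the gluing formula gives
\begin{equation*}
0 = \eta(B_{W_i}) = \eta^{\mathrm{APS}}(B_{Z_i^\circ}) - \eta^{\mathrm{APS}}(B_{M_i^\circ}) + K_i,
\end{equation*}
and the desired equality $\eta(B_\cO)=\eta(B_M)$ will follow once one checks that the correction $K$ (in its $N_i$-component) cancels with $K_i$. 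The main obstacle is precisely this bookkeeping of boundary correction terms across the three applications of the gluing formula: all three corrections are built from the same tangential operator on $N_i$ and the same pair of APS projections, so one expects them to cancel on the nose, and in any case a small generic perturbation of $h_{N_i}$ can be arranged to make the tangential operator invertible, eliminating the correction terms outright.
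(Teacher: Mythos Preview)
Your overall strategy---modify to cylindrical collars via flat-factor moves, then apply the Kirk--Lesch gluing formula to the two decompositions---matches the paper's. But the step where you dispose of the correction term $K$ has a real gap.

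The correction term in the Kirk--Lesch formula (Theorem~\ref{thm:etaglue}) is the Maslov triple index $m(V_+,V_-;H^\ast(Y))$, and it is \emph{not} determined by the tangential operator alone: it depends on the pair of Lagrangians $V_\pm = \operatorname{im}(H^\ast(X_\pm)\to H^\ast(Y))$. In your two applications $V_+$ is indeed the same (it comes from $M^\circ$), but $V_-$ is a priori different, coming from $\bigcup Z_i^\circ$ in one case and $\bigcup M_i^\circ$ in the other. So the assertion ``the same correction $K$'' is exactly what needs to be proved. Your fallback---perturb $h_{N_i}$ so that the tangential operator is invertible---works for the Dirac operator (and is used in Section~\ref{sec:spin}), but fails for the odd signature operator: the kernel of the tangential signature operator on $N_i$ is $H^\ast(N_i;\R)$, a homotopy invariant that no metric perturbation can remove.

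The paper closes this gap in two moves. First, it observes that the orientation-reversing isometry of each $Z_i^\circ$ and $M_i^\circ$ preserves the Lagrangian $V_-$, so $\eta_{\mathrm{APS}}(B_{X_-},V_-)=0$ separately in both decompositions; this replaces your $W_i$ argument and avoids the attendant bookkeeping of a third gluing formula. Second, it checks by a direct cohomological computation that the images of $H^3(Z_i^\circ)$ and $H^3(M_i^\circ)$ in $H^3(N_i)$ coincide (both equal $H^3(T^3)^{B_i}$ when $\dim F_i=3$, and $H^3(N_i)=0$ when $\dim F_i=1$), so the two Maslov terms really are equal. That topological computation is the missing ingredient in your argument.
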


\noindent Before proceeding to the proof we make some remarks on the metrics involved, all of which are clear from Joyce's construction. We may choose the decompositions above such that $(M^o,g^t|_{M^o})$ is isometrically identified with $(M^o,g^0|_{M^\circ})$, and we may arrange that this holds for all $t\in (0,\varepsilon]$. 

The metric $g^t$ is constructed such that $(M_i^o,g^t|_{M_i^o})$ is locally the Riemannian product of metrics on $F_i$ and $X_i$. Indeed, the flat metric $g^0$ has a compatible product structure near the boundary of $M^\circ$ and thus the partition of unity respects this structure. In particular, Hypothesis \ref{hyp2} guarantees that each of $(M_i^o,g^t|_{M_i^o})$ has an orientation-reversing isometry.

To prove Proposition \ref{prop:etaequal} we invoke a gluing formula for odd signature $\eta$-invariants. As our application is similar to that of \cite{cgn}, we also refer the reader there for more details.

\begin{theorem}\cite[Theorem 8.12]{kirklesch}\label{thm:etaglue}
    Let $X$ be a closed, oriented odd-dimensional Riemannian manifold and $Y\subset X$ a hypersurface separating $X$ into $X_+$ and $X_-$. Suppose the Riemannian metric on $X$ is a product in a collar neighborhood of $Y$. Then
    \begin{equation}
        \eta(B_M) = \eta_\text{\emph{APS}}(B_{X_+},V_+) + \eta_\text{\emph{APS}}(B_{X_-},V_-) + m(V_+,V_-;H^\ast(Y)) \label{eq:etaglue}
    \end{equation}
    with real coefficients in cohomology understood, where the last term is the Maslov index of the Lagrangian subspaces $V_\pm \subset H^\ast(Y)$ defined by $V_{\pm}=\text{{\emph{im}}}(H^\ast(X_\pm)\to H^\ast(Y))$.
\end{theorem}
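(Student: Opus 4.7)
The plan is to establish the gluing formula by comparing Atiyah--Patodi--Singer index identities for $X$, $X_+$ and $X_-$, and then identifying the resulting discrepancy as the Maslov index via a symplectic linear algebra computation on the neck. First I would pass to the cylindrical-end setup: using the assumed product structure of the metric in a collar of $Y$, attach half-cylinders $[0,\infty)\times Y$ to each $X_\pm$ to form complete Riemannian manifolds $\widehat{X}_\pm$. In the collar, the odd signature operator takes the form $B=\sigma(\partial_u+A)$, where $A$ is a self-adjoint elliptic operator on $Y$ whose kernel is canonically identified with $H^\ast(Y;\R)$ by Hodge theory. This kernel carries a natural symplectic structure coming from the Poincar\'e duality pairing, and by Poincar\'e--Lefschetz duality the subspaces $V_\pm$ are Lagrangian with respect to it; hence $(B_{X_\pm},V_\pm)$ defines an admissible (generalized) APS boundary value problem with a well-defined $\eta$-invariant.

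Next, I would express each term on the right-hand side of \eqref{eq:etaglue} as a spectral quantity on the cylindrical-end manifold $\widehat{X}_\pm$. The APS $\eta$-invariant can be rewritten as the $L^2$-$\eta$-invariant of $B_{\widehat{X}_\pm}$, corrected by contributions from the extended $L^2$-kernel, and these corrections are controlled precisely by the chosen Lagrangian $V_\pm$. For the left-hand side, I would use a long-neck argument: replace the collar of $Y$ in $X$ with $[-T,T]\times Y$, send $T\to\infty$, and show via resolvent estimates and unique continuation that $\eta(B_X)$ splits into the two cylindrical-end contributions, up to a correction term supported in $\ker A$.

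The technically decisive step, and the one I expect to be the main obstacle, is to identify that neck correction with $m(V_+,V_-;H^\ast(Y))$. The difficulty is that on $\widehat{X}_+$ the implicit reference Lagrangian is determined by the spectral projector (roughly the boundary values of the negative eigenspace on the cylinder), while on $\widehat{X}_-$ a different reference is natural, and matching these against $V_+$ and $V_-$ produces a triple of Lagrangians whose relative position is exactly what the Maslov index measures. To make this rigorous I would either run a spectral flow argument along a path of Fredholm boundary conditions interpolating from a reference pair to $(V_+,V_-)$, using that the spectral flow of the signature operator along such a path counts Maslov-type crossings, or perform the computation directly on the flat model cylinder $\R\times Y$, where $B$ has constant coefficients and the entire comparison reduces to finite-dimensional symplectic linear algebra on $\ker A\cong H^\ast(Y;\R)$.
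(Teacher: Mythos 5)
The paper does not prove this statement; it is quoted verbatim from Kirk and Lesch, \cite[Theorem 8.12]{kirklesch}, and used as a black box. There is therefore no in-paper argument to compare against, so your proposal has to be judged against the literature it implicitly re-derives. Your outline is a recognizable and essentially sound proof strategy, but it is closer in spirit to the cylindrical-end / $L^2$-$\eta$ approach of Bunke, M\"uller and Dai--Freed than to Kirk--Lesch's own route. Kirk--Lesch work directly with boundary value problems on the compact pieces, use the Calder\'on projector as a distinguished Lagrangian, invoke the Scott--Wojciechowski determinant formula to compare $\eta$ across changes of Lagrangian boundary condition, and feed the result into a splitting formula for spectral flow whose Maslov-index interpretation then yields \eqref{eq:etaglue}. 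Your version replaces the Calder\'on projector bookkeeping by the adiabatic/long-neck limit and the identification of $\ker A$ with $H^\ast(Y;\R)$; both routes are viable, and the long-neck picture has the advantage of making the appearance of $\ker A \cong H^\ast(Y)$ and the role of the scattering Lagrangians very transparent, at the cost of requiring the uniform resolvent and large-time estimates you flag.

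The one genuine gap in your sketch is that you never explain why the formula holds as an \emph{exact} real identity rather than only modulo $\Z$. A naive long-neck or spectral-flow argument, of the kind you describe, most naturally yields a congruence mod $\Z$, because $\eta$ jumps by integers as eigenvalues cross zero; this is exactly why the companion Dirac statement, Theorem~\ref{thm:etagluedirac} in the paper, is only stated mod $\Z$. For the odd signature operator the integer ambiguity is controlled: the kernel of $B$ is topological, and the integer defect in a cut-and-paste of signatures is governed by Wall's non-additivity theorem, whose correction term is precisely a Maslov (triple) index on $H^\ast(Y)$. To close your argument you would need to track the integer spectral-flow contribution along the interpolation of boundary conditions and identify it with this Wall term (or, equivalently, use that the index of the signature operator on the cylinder piece is a topological invariant), so that it combines with the mod-$\Z$ computation to give the exact equality. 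Without that step, your proof establishes \eqref{eq:etaglue} only modulo $\Z$, which is strictly weaker than the cited theorem.
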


\noindent To apply this result, we must modify our metrics. Suppose the ball $D_i\subset W_i$ has radius $R$. In a collar neighborhood of the boundary of $M^\circ$ inside $(\cO,g^0)$, the metric is the quotient of a metric on $F_i\times W_i$ of the form $g_{1} + dr^2 + r^2g_{2}$, where $g_1$ is the metric on $F_i$ and $g_2$ is the metric on the unit sphere in $W_i$. Here $r$ is the radial coordinate in $W_i$ and $r\in [R-\epsilon,R+\epsilon]$ for some small $\epsilon>0$. This is not a product metric for the collar, but may be modified as follows.

Choose a smooth function $\rho:[R-\epsilon,R+\epsilon]\to \R$ such that $\rho(r)=r$ for $|R-r|>2\epsilon/3$ and $\rho(r)=1$ for $|R-r|<\epsilon/3$. Replace $g_{1} + dr^2 + r^2g_{2}$ with $g_{1} + dr^2 + \rho(r)^2g_{2}$; the effect of altering the metric $dr^2 + r^2g_{2}$ on a collar neighborhood of the sphere $\partial D_i\subset W_i$ to a product metric is depicted in Figure \ref{fig:collar}. This replacement is compatible with the actions of $A_i$ and $B_i$, and in this way we obtain a new metric $g^0_c$ on $\cO$ unaltered outside of small collar neighborhoods of the gluing boundaries. As the modification is radial in $W_i$, the manifold $\smash{(Z_i^o,g_c^0|_{Z_i^o})}$ retains an orientation-reversing isometry. As $F_i$ is a non-trivial flat manifold, the metric $\smash{g^0_c}$ is flat factor equivalent to $\smash{g^0}$. By Proposition \ref{prop:flatprod}, we may compute $\eta(B_\cO)$ using $g^0_c$ in place of $g^0$.

\begin{figure}[t]
\centering
\includegraphics[scale=0.75]{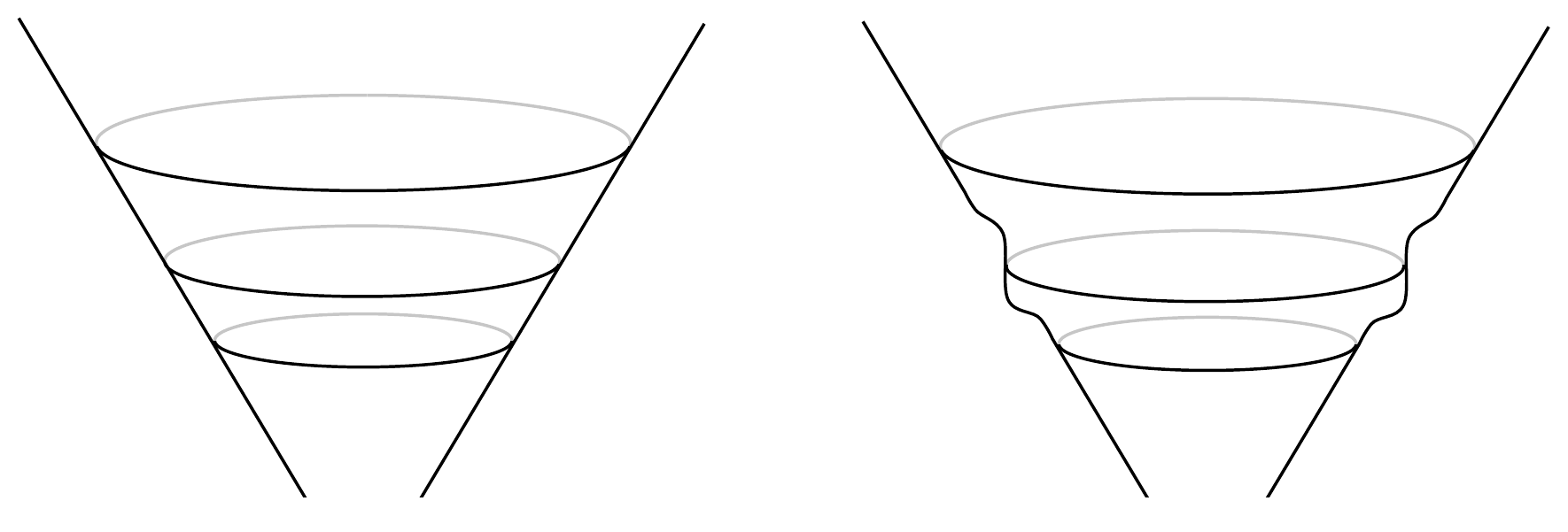}
\caption{Modifying a metric near a collar neighborhood to a product metric. }\label{fig:collar}
\end{figure}

We make a similar modification of the metric $g^t$ on $M$. In a gluing collar neighborhood as above, the metric $g^t$ is of the form $g_1+h^t$ where $h^t$ is glued together from $dr^2+r^2g_2$ and a metric on $X_i$ which may be arranged close to $dr^2+r^2g_2$. We alter $g^t$ to equal $dr^2+g_2$ on $|R-r|<\epsilon/3$, and so that it is unchanged for $|R-r|> 2\epsilon/3$ and outside of the gluing collar. This may all be done $B_i$-equivariantly, either directly or by averaging afterwards. We call the resulting metric $g^t_c$. By Proposition \ref{prop:flatprod}, we may compute $\eta(B_{(M,g^t)})$ using $g^t_c$ in place of $g^t$. We may also arrange that $(M_i^\circ,g^t_c|_{M_i^\circ})$ retains an orientation-reversing isometry.

\begin{proof}[Proof of Proposition \ref{prop:etaequal}]
    As is evident from the proof of \cite[Theorem 8.12]{kirklesch}, the statement of Theorem \ref{thm:etaglue} works equally if $X$ is a Riemannian orbifold isometric to a quotient by a finite group, as long as the hypersurface $Y$ is disjoint from the singular set. 
    
    We compare the applications of \eqref{eq:etaglue} to $(\cO,g^0_c)$ and $(M,g^t_c)$ each with $X_+=M^\circ$. The term $\eta_\text{APS}(B_{X_+},V_+)$ is the same in both applications. The term $\eta_\text{APS}(B_{X_-},V_-)$ vanishes in both cases, as $(Z^\circ_i,g^0_c|_{Z^\circ_i})$ and $(M^\circ_i,g^t_c|_{M^\circ_i})$ have orientation-reversing isometries preserving the Lagrangian subspace $V_-$. This vanishing argument is the same as in \cite[Section 4.3]{cgn}.
    
    Thus $\eta(B_{M})=\eta(B_\cO)$ holds if we show that the Maslov index $m(V_+,V_-;H^\ast(Y))$ in the two cases are the same. According to \cite[Remark 4.3]{cgn}, this Maslov index only depends on $\text{im}(H^3(X_\pm)\to H^3(Y))$. By additivity under disjoint union, it suffices to show that the images of  $H^3(Z_i^\circ)$ and $H^3(M_i^\circ)$ in $H^3(N_i)$ are equal for each $i$.
        
    There are two cases to consider. First suppose that $F_i$ is a 1-torus. We may identify $\partial D_i\subset W_i$ with $S^5$. Note $H^\ast(S^5/A_i) = H^\ast(S^5)^{A_i}=H^\ast(S^5)$. Then
    \[
        H^3(N_i)=H^3((S^1\times S^5/A_i)/B_i)= H^3(S^1\times S^5/A_i)^{B_i} = 0,
    \]
    and there is nothing to check. Next suppose $F_i$ is a 3-torus. We may identify $\partial D_i\subset W_i$ with $S^3$. Now $H^\ast(S^3/A_i)$ is isomorphic to $H^\ast(S^3)$. Thus $H^3(N_i)$ may be identified with the $B_i$-invariant subpace of $H^3(T^3)\oplus H^3(S^3/A_i)$. The map $H^3(Z^\circ_i)\to H^3(N_i)$ has image $H^3(T^3)^{B_i}$. Indeed, under these identifications it is the $B_i$-invariant image of the map
    \begin{equation}
        H^3(T^3)\oplus H^3(D^4/A_i)\to H^3(T^3)\oplus H^3(S^3/A_i).\label{eq:map1}
    \end{equation}
    The image of $H^3(M_i^\circ)$ is exactly the same; in \eqref{eq:map1}, $D^4/A_i$ is replaced by the truncation $X_i^\circ$ of $X_i$ with boundary $S^3/A_i$, and $H^3(X_i^\circ)\to H^3(S^3/A_i)$ is zero by the long exact sequence of a pair.
\end{proof}

It is clear that Proposition \ref{prop:etaequal} holds under more general conditions than stated. In particular, we have not used anything about $G_2$-structures. Similar computations may be done for resolutions of any orbifold $T^n/\Gamma$, where $n$ is aribitrary, and the singular set behaves reasonably well, as in Hypothesis \ref{hyp2}; the dimensions of the fixed point tori need not be $1$ and $3$. Note that the modification of metrics used above is valid even if $\dim F_i=0$, for in this case the alteration is conformal.

\section{Comparison of spin Dirac $\eta$-invariants}\label{sec:spin}
For the Dirac $\eta$-invariants, we follow the same strategy. We continue assuming Hypothesis \ref{hyp2} and use the setup of Section \ref{sec:oddsign}. We will apply the following mod $\Z$ gluing formula.

\begin{theorem}\cite{bunke},\cite[Theorem 5.9]{kirklesch}\label{thm:etagluedirac}
    Let $X$ be a closed, oriented odd-dimensional Riemannian manifold and $Y\subset X$ a hypersurface separating $X$ into $X_+$ and $X_-$. Suppose the metric on $X$ is a product in a neighborhood of $Y$, and the tangential operator on $Y$ is invertible. Then
    \begin{equation}
        \xi(D_X) = \xi_\text{\emph{APS}}(D_{X_+}) + \xi_\text{\emph{APS}}(D_{X_-}) \mod \Z \label{eq:diracetaglue}
    \end{equation}
\end{theorem}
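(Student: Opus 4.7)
The plan is to establish the formula via adiabatic stretching of the collar neighborhood of $Y$, combined with a spectral-flow argument and the APS index theorem.

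First, I would exploit the product structure of the metric near $Y$ to insert a cylinder $[-R,R]\times Y$ for $R\geq 0$, producing a one-parameter family $X_R$ of closed Riemannian manifolds diffeomorphic to $X$ and coinciding with $X$ outside the collar. On the inserted cylinder the spin Dirac operator takes the standard form $D_{X_R}=\sigma(\partial_r + A)$, where $A$ is the (invertible) tangential operator on $Y$. Since the reduced $\eta$-invariant $\xi$ of a continuous family of self-adjoint elliptic operators varies continuously modulo integer jumps coming from spectral flow through zero, this gives $\xi(D_{X_R})\equiv \xi(D_X)\pmod{\Z}$ for every $R$.

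Next I would pass to the adiabatic limit $R\to\infty$, in which $X_R$ degenerates into the disjoint union of $X_\pm$ equipped with infinite cylindrical ends. Invertibility of $A$ is essential here: it guarantees no zero modes develop on the cylinder, so the small eigenvalues of $D_{X_R}$ stay uniformly bounded away from zero in $R$. Standard spectral-convergence arguments, following the scattering-theoretic approaches of Bunke and M\"uller, then yield
\[ \lim_{R\to\infty} \xi(D_{X_R}) \equiv \xi_\infty(D_{X_+}) + \xi_\infty(D_{X_-}) \pmod{\Z}, \]
where $\xi_\infty$ denotes the reduced $\eta$-invariant of the spin Dirac operator on a manifold with an infinite cylindrical end.

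Finally I would identify $\xi_\infty(D_{X_\pm})$ with $\xi_{\text{APS}}(D_{X_\pm})$ modulo $\Z$. Under invertibility of $A$, the $L^2$-kernel on a cylindrical-end manifold agrees with the kernel under APS boundary conditions up to a finite-dimensional correction, and the APS index theorem applied to a cobordism capping off the cylindrical end shows that the remaining discrepancy is an integer. Combining the three steps delivers the stated gluing formula. The main obstacle is the adiabatic-limit step: uniformly controlling the small eigenvalues of $D_{X_R}$ as $R\to\infty$ and extracting the limit of $\xi$ (rather than just $\eta$) demands a delicate analysis of the resolvent near zero, and it is this analytic input that constitutes the technical heart of both cited proofs. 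The passage to mod $\Z$ is what makes the statement tractable, since all integer-valued contributions from spectral flow, kernel jumps, and the Maslov-type correction terms present in the full signed formula disappear.
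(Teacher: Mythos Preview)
The paper does not supply its own proof of this statement: Theorem~\ref{thm:etagluedirac} is quoted verbatim from the literature (Bunke; Kirk--Lesch) and then applied. There is therefore nothing in the paper to compare your argument against. Your sketch is a reasonable outline of the adiabatic-limit approach underlying the cited references, but for the purposes of this paper the result is treated as a black box, and no proof is expected or given.
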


We may apply Theorem \ref{thm:etagluedirac} to both $(M,g^t_c)$ and $(\cO,g^0_c)$, as the collar neighborhood in each case is a union of manifolds $[-\epsilon,\epsilon]\times N_i$, where $N_i$ is a finite quotient of either $I\times T^3\times S^3/A_i$ or $I\times S^1\times S^5/A_i$. In each case, $Y=\cup N_i$ has a metric of positive scalar curvature, and so has no harmonic spinors. Consequently, the tangential operator is invertible.

We would like to show $\xi(D_{(M,g^t)})\equiv \xi(D_{(\cO,g^0)})$ (mod $\Z$). From the gluing formula \eqref{eq:diracetaglue}, it suffices to show that $\xi_\text{APS}(D_{X_-})=0$ (mod $\Z$) in the two cases of $(M,g^t_c)$ and $(\cO,g^0_c)$. In each case, Hypothesis \ref{hyp2} guarantees that $X_-$ admits an orientation-reversing isometry. If this isometry is {\emph{spin}}, then $\eta_\text{APS}(D_{X_-})=0$. This implies $\xi_\text{APS}(D_{X_-})=h_\text{APS}(D_{X_-})/2$, where $h_\text{APS}(D_{X_-})$ denotes the dimension of the kernel of the Dirac operator $D_{X_-}$ with APS boundary conditions.

\begin{remark}
    If the orientation-reversing isometries in Hypothesis \ref{hyp2} preserve the spin structure, we say that  Hypothesis \ref{hyp2} {\emph{with spin isometries}} holds.
\end{remark}

\noindent Thus far we have established that $\xi(D_{(M,g^t)})\equiv \xi(D_{(\cO,g^0)})$ (mod $\frac{1}{2}\Z$) under the assumption of Hypothesis \ref{hyp2} with spin isometries.

Now assume Hypothesis \ref{hyp}. As $X_-$ is isometric in each case to a product $T^3\times V$ where $V$ is some Riemannian manifold, the K\"{u}nneth theorem for elliptic complexes implies $h_\text{APS}(D_{X_-})=h(D_{T^3})h_\text{APS}(D_V)$, compare \cite[p.12]{hitchin}. Harmonic spinors on $T^3$ are in correspondence with constant vectors in the standard $2$-dimensional representation of $\text{Spin}(3)$ via parallel transport. Thus $h(D_{T^3})=2$ and $h_\text{APS}(D_{X_-})\equiv 0$ (mod 2). It follows that $\xi_\text{APS}(D_{X_-})=0$ (mod $\Z$). We conclude:

\begin{prop}\label{prop:diracetaequal}
    If Hypothesis \ref{hyp} holds, then $\xi(D_{(M,g^t)})\equiv \xi(D_{(\cO,g^0)})$ {\emph{(mod $\Z$)}}. If only Hypothesis \ref{hyp2} with spin isometries holds, then this congruence holds modulo $\frac{1}{2}\Z$.
\end{prop}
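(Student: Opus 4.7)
The plan is to mirror the odd-signature argument from Section \ref{sec:oddsign}, but with Theorem \ref{thm:etagluedirac} replacing Theorem \ref{thm:etaglue}. First, pass from $g^0$ and $g^t$ to the collar-adjusted metrics $g^0_c$ and $g^t_c$ of Section \ref{sec:oddsign}, so that the metric is a genuine Riemannian product in a neighborhood of each hypersurface $N_i$; by Proposition \ref{prop:flatprod}, this changes neither $\xi(D_{(\cO,g^0)})$ nor $\xi(D_{(M,g^t)})$ modulo $\Z$. The tangential operator on $N_i$ is invertible, since $N_i$ is a finite quotient of $T^3\times S^3/A_i$ or $S^1\times S^5/A_i$, each of which admits a metric of positive scalar curvature and thus has no harmonic spinors by the Lichnerowicz formula.

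With these modifications in place, I would apply Theorem \ref{thm:etagluedirac} using $X_+=M^\circ$ and $X_-$ the disjoint union of the resolution pieces $M_i^\circ$ (for $M$) or the orbifold pieces $Z_i^\circ$ (for $\cO$). The $\xi_\text{APS}(D_{X_+})$ terms agree in both decompositions since $M^\circ$ appears with the same metric, so the question reduces to showing that $\xi_\text{APS}(D_{X_-})$ vanishes mod $\Z$ (respectively mod $\tfrac{1}{2}\Z$) in each case. Under Hypothesis \ref{hyp2} with spin isometries, the orientation-reversing isometry $\tau\times \text{id}$ lifts to a spin isometry of each component of $X_-$, which anti-commutes with the Dirac operator and preserves the APS boundary condition. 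This forces $\eta_\text{APS}(D_{X_-})=0$, so $\xi_\text{APS}(D_{X_-})=\tfrac{1}{2}h_\text{APS}(D_{X_-})\in\tfrac{1}{2}\Z$, which already gives the mod $\tfrac{1}{2}\Z$ statement.

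To upgrade to mod $\Z$ under Hypothesis \ref{hyp}, I would use that $B_i$ is trivial, so each component of $X_-$ is a genuine Riemannian product $T^3\times V$, where $V$ is either the truncated chart $D_i/A_i$ or its $SU(2)$-resolution $X_i^\circ$. The Künneth formula for APS boundary value problems, of the type used by Hitchin, gives $h_\text{APS}(D_{T^3\times V})=h(D_{T^3})\cdot h_\text{APS}(D_V)$. Parallel transport identifies harmonic spinors on the flat $T^3$ with the $2$-dimensional spin representation, so $h(D_{T^3})=2$, making $h_\text{APS}(D_{X_-})$ even and hence $\xi_\text{APS}(D_{X_-})\in\Z$.

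The main obstacle I anticipate is the sign-change step: one must verify carefully that the chosen spin lift of $\tau\times\text{id}$ produces the required anti-commutation with the Dirac operator and preserves the APS boundary projection, so that the cancellation forcing $\eta_\text{APS}(D_{X_-})=0$ is genuine. The orientation reversal on the $T^3$-factor supplies the Clifford sign, while the action on $W_i/A_i$ or its resolution is trivial; once the spin lift is shown to exist globally on $X_-$ and to commute with the collar structure, the remaining steps are straightforward bookkeeping within the gluing formula.
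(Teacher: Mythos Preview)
Your proposal is correct and follows essentially the same argument as the paper: pass to collar-adjusted metrics via Proposition \ref{prop:flatprod}, use positive scalar curvature on $N_i$ to guarantee invertibility of the tangential operator, apply Theorem \ref{thm:etagluedirac}, kill $\eta_\text{APS}(D_{X_-})$ via the orientation-reversing spin isometry, and then under Hypothesis \ref{hyp} invoke the K\"{u}nneth formula together with $h(D_{T^3})=2$ to make $h_\text{APS}(D_{X_-})$ even. The only point the paper handles more tersely than you do is the spin-lift verification, which it simply asserts by analogy with \cite[Section 4.3]{cgn}.
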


\noindent However, in many cases the mod $\Z$ congruence still holds under weaker conditions than those given in Hypothesis \ref{hyp}. We next describe a common feature of many examples encountered in Section \ref{sec:comps} for which the desired congruence continues to hold.

\begin{situation}\label{sit}{\emph{
    Suppose Hypothesis \ref{hyp2} holds. Let $\{F_i/B_i\}_{i\in J}$ be the connected components of the singular set of $\cO$ which are not 3-tori. Here $J\subset I/\Gamma$. When defining the resolution manifold $M$, we replace a neighborhood $Z_i^\circ$ of each $F_i/B_i$ with $M_i^\circ$. We suppose that for each $i\in J$, there are an even number of $k\in J$ such that the resolution data $M_{k}^\circ$ is isomorphic to that of $M_i^\circ$.
    }}
\end{situation}

\begin{prop}\label{prop:diracetaequal2}
    In Situation \ref{sit}, $\xi(D_{(M,g^t)})\equiv \xi(D_{(\cO,g^0)})$ {\emph{(mod $\Z$)}}.
\end{prop}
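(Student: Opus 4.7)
The plan is to follow the strategy of Proposition \ref{prop:diracetaequal} and then upgrade its final parity argument using the pairing assumption in Situation \ref{sit}. First, I would modify $g^0$ and $g^t$ by flat factor moves, as in Section \ref{sec:oddsign}, to obtain metrics $g^0_c$ and $g^t_c$ that are products in collar neighborhoods of the gluing hypersurfaces; by Proposition \ref{prop:flatprod} the reduced spin Dirac $\eta$-invariants are unchanged mod $\Z$ under this replacement. Applying Theorem \ref{thm:etagluedirac} to both $(M,g^t_c)$ and $(\cO,g^0_c)$ with common piece $X_+=M^\circ$, the terms $\xi_{\mathrm{APS}}(D_{M^\circ})$ cancel in the difference, so it suffices to show that $\xi_{\mathrm{APS}}(D_{X_-^M})$ and $\xi_{\mathrm{APS}}(D_{X_-^\cO})$ are both integers, where $X_-^M=\bigsqcup_i M_i^\circ$ and $X_-^\cO=\bigsqcup_i Z_i^\circ$.

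The orientation-reversing spin isometries supplied by Hypothesis \ref{hyp2} act componentwise to force $\eta_{\mathrm{APS}}(D_\bullet)=0$ on each $M_i^\circ$ and $Z_i^\circ$, so $\xi_{\mathrm{APS}}=\tfrac{1}{2}h_{\mathrm{APS}}$ on each piece. Hence it remains to show that $\sum_{i}h_{\mathrm{APS}}(D_{M_i^\circ})$ and $\sum_i h_{\mathrm{APS}}(D_{Z_i^\circ})$ are even. Split the indices into those $i\notin J$, for which $F_i/B_i$ is a genuine $3$-torus (forcing $B_i$ trivial and $n=2$ in Hypothesis \ref{hyp2}), and those $i\in J$. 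On the first group the pieces are Riemannian products $T^3\times X_i^\circ$ and $T^3\times(D_i/A_i)$, so the Künneth argument recalled in Section \ref{sec:spin} gives even kernel dimensions because $h(D_{T^3})=2$.

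For $i\in J$, I would partition $J$ into isomorphism classes of resolution data $M_i^\circ$; by Situation \ref{sit} each class has even cardinality, and within a class the values $h_{\mathrm{APS}}(D_{M_i^\circ})$ all agree, so each class contributes an even number and the sum over $J$ is even. For the corresponding orbifold sum, I would argue that an isomorphism $M_i^\circ\cong M_k^\circ$ records an isomorphism of the underlying data $(F_i,A_i,B_i,X_i)\cong(F_k,A_k,B_k,X_k)$, and since $X_i$ is asymptotic to $W_i/A_i$ this in turn determines an isomorphism $Z_i^\circ\cong Z_k^\circ$, yielding the same parity bookkeeping on the orbifold side. The main obstacle I anticipate is precisely this last step---pinning down the appropriate strength of the word ``isomorphic'' in Situation \ref{sit} so that it transfers between the resolved and orbifold pictures and preserves the relevant Dirac kernel dimensions; once that is in hand, both sums are even, both $\xi_{\mathrm{APS}}(D_{X_-^\bullet})$ are integers, and the claim follows from the gluing formula.
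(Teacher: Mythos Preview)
Your proposal is correct and follows essentially the same route as the paper: reduce via the gluing formula and the orientation-reversing isometries to the parity of $\sum_i h_{\mathrm{APS}}$, handle the $T^3$ components by the K\"unneth argument, and handle the remaining components by grouping into even-cardinality isomorphism classes. The paper's proof is terser and simply asserts that Situation~\ref{sit} allows one to gather the $h_{\mathrm{APS}}(D_{Z_i^\circ})$ into groups of equal terms of even cardinality, without spelling out the transfer of isomorphisms from the $M_i^\circ$ side to the $Z_i^\circ$ side; your flagged ``main obstacle'' is exactly this step, and your proposed justification (that isomorphic resolution data entails isomorphic asymptotic models and hence isomorphic $Z_i^\circ$) is the intended reading.
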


\begin{proof}
    Let us return to the argument for the proof of Proposition \ref{prop:diracetaequal} given above, and focus on the case of $(\cO,g^0)$. We may write $X_-=\cup Z_i^\circ$ where $i$ ranges over the connected components of the singular set of $\cO$. Then $h_\text{APS}(D_{X_-})= \sum h_\text{APS}(D_{Z_i^\circ})$. The components with $F_i/B_i=T^3$ have $h_\text{APS}(D_{Z_i^\circ})\equiv 0$ (mod 2) as argued in the case of Hypothesis \ref{hyp}. Situation \ref{sit} allows us to gather the remaining $h_\text{APS}(D_{Z_i^\circ})$ into groups of equal terms of even cardinality, implying $h_\text{APS}(D_{X_-})\equiv 0$ (mod 2). The same holds for the case of $(M,g^t)$. Then $\xi(D_{(M,g^t)})\equiv \xi(D_{(\cO,g^t)})$ (mod $\Z$) follows again from the gluing formula \eqref{eq:diracetaglue}, as claimed.
\end{proof}

\section{Flexibility of $G_2$-structure}\label{sec:g2structure}
Let $(M,g)$ be a closed Riemannian 7-manifold. Suppose a spinor bundle $SM$ over $M$ is chosen, and $s\in\Gamma(SM)$ a non-vanishing spinor. Let $g^{SM}$ and $\nabla^{SM}$ denote the metric and connection on $SM$ induced by $g$ and the Levi-Civita connection of $g$. Then Crowley--Goette--Nordstr\"{o}m \cite{cgn} define $\overline{\nu}$, the integer-valued extended $\nu$-invariant of $(M,g,s)$, as follows:
\begin{equation}
    \overline{\nu}(M,g,s) = 2\int_{M} s^\ast \psi(\nabla^{SM},g^{SM}) + 3\eta(B_{M}) - 24 \eta(D_{M}) \in \Z\label{eq:nubarexp}
\end{equation}
We describe the Mathai--Quillen current $s^\ast \psi(\nabla^{SM},g^{SM})$ following \cite{cgn}. The curvature $R^{SM}$ is an element of $\Omega^2(M;\Lambda^2 SM)$, and $\nabla^{SM} s$ of $\Omega^1(M;SM)$. We have
\begin{equation}
    s^\ast\psi(\nabla^{SM},g^{SM}) = \int_0^\infty\int^B \frac{s}{2\sqrt{t}}e^{-R^{SM}+\sqrt{t}\nabla^{SM}s+t\|s\|^2}dt\label{eq:mq}
\end{equation}
Here $\int^B:\Omega^\ast(M;\Lambda^\ast SM)\to \Omega^\ast(M)$ denotes the Berezin integral, extracting a certain constant multiple of the top-degree component in $\smash{\Lambda^\ast SM}$. In particular, $s^\ast\psi(\nabla^{SM},g^{SM})$ is a differential form on $M$, not necessarily homogeneous.

As described in \cite[\S 2.3]{cn-newinvs}, unit spinors in $\Gamma(SM)$ are in correspondence with $G_2$-structures on $M$. If $\phi$ is the $G_2$-structure corresponding to $s$, then by \cite[Theorem 1.2]{cgn}, we have
\begin{equation}
    \nu(M,\phi) \equiv \overline{\nu}(M,g,s) + 24h(D_M) \mod 48\label{eq:unbartobar}
\end{equation}
for any metric $g$. Here $h(D_M)$ is the dimension of the kernel of the spin Dirac operator $D_M$. When $s$ and $g$ are determined by a $G_2$-structure $\phi$ we write $\overline{\nu}(M,\phi)=\overline{\nu}(M,g,s)$. When $s$ is $g$-parallel, or equivalently when the corresponding $G_2$-structure is compatible with $g$ and torsion-free, then it is shown \cite[Lemma 1.3]{cgn} that $s^\ast\psi(\nabla^{SM},g^{SM})=0$, leading to
\[
    \overline{\nu}(M,g) = 3\eta(B_{M}) - 24 \eta(D_{M})  \equiv \nu(M,\phi) + 24(1+b_1(M))\mod 48.
\]
Here is used the fact that $h(D_M)=1+b_1(M)$ for a closed spin Riemannian manifold with holonomy contained in $G_2$. We presently determine some other conditions under which the term $\int_M s^\ast\psi(\nabla^{SM},g^{SM})$ in \eqref{eq:nubarexp} vanishes.

We say that $(M,g,s)$ as above is {\emph{torsion-free up to flat factors}} if there is an open set $U\subset M$ such that $s$ is $\nabla^{SM}$-parallel on the complement $M\setminus U$, and $U$ is covered by open sets $U_i$ each with an isometry $\phi_i:(F_i,g_{F_i})\times (V_i,g_{V_i})\to (U_i,g|_{U_i})$, where $(F_i,g_{F_i})$ is flat and of dimension $\geqslant 1$, and the spinor $s$ is $\nabla^{SM}$-parallel in the directions $(\phi_i)_\ast(v)$ where $v\in TF_i$. In short, $(M,g)$ is a torsion-free $G_2$-manifold away from $U$ with parallel spinor $s$, and on $U$, the metric $g$ locally splits off a flat factor, and $s$ is parallel with respect to this flat factor. As in the definition of flat factor equivalence, it suffices to consider $(F,g_F)=(I,dt^2)$ for intervals $I\subset\R$.

\begin{prop}\label{prop:flatfactorphi}
    Suppose $(M,g,s)$ is torsion-free up to flat factors. Then
    \begin{equation*}
    \overline{\nu}(M,g,s) = 3\eta(B_{M}) - 24 \eta(D_{M}).
\end{equation*}
\end{prop}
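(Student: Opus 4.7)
The plan is to show that the Mathai--Quillen term in \eqref{eq:nubarexp} integrates to zero: since $\overline{\nu}(M,g,s) - 3\eta(B_M) + 24\eta(D_M) = 2\int_M s^\ast\psi(\nabla^{SM}, g^{SM})$, it suffices to prove $\int_M s^\ast\psi = 0$. Only the degree-$7$ component of $s^\ast\psi$ contributes to this integral, so I will establish pointwise vanishing of $(s^\ast\psi)_{[7]}$ on the open cover $\{M\setminus U\}\cup \{U_i\}$ provided by the hypothesis.

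On $M\setminus U$, the spinor $s$ is $\nabla^{SM}$-parallel, so $s^\ast\psi(\nabla^{SM}, g^{SM})$ vanishes pointwise by \cite[Lemma 1.3]{cgn}; this is exactly the reasoning already invoked for closed torsion-free $G_2$-manifolds earlier in the section. On each $U_i$, pull back via $\phi_i$ and work on the Riemannian product $(F_i, g_{F_i})\times(V_i, g_{V_i})$. The Levi-Civita connection on a product is the product of the factor connections, and because $F_i$ is flat, the curvature of $g$ has no components in $T^\ast F_i$-directions. Consequently $R^{SM}$, viewed as an element of $\Omega^2(U_i;\Lambda^2 SM)$, has no $T^\ast F_i$ factor in its $\Omega^2$-part. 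The hypothesis that $s$ is parallel in the directions pushed forward from $TF_i$ says that $\nabla^{SM}s\in \Omega^1(U_i;SM)$ vanishes on $(\phi_i)_\ast(TF_i)$, so it also has no $T^\ast F_i$ component. The function $t\|s\|^2$ contributes no form degree. Expanding the exponential in \eqref{eq:mq} as a sum of wedge products in the graded algebra $\Omega^\ast(U_i)\otimes \Lambda^\ast SM$, every term has exterior-form content lying entirely in $\Lambda^\ast T^\ast V_i$. The Berezin integral $\int^B$ and the prefactor $s/(2\sqrt{t})$ act only on the $\Lambda^\ast SM$ tensor factor, and the $t$-integration does not introduce new form factors, so $s^\ast\psi|_{U_i}$ has all its form content in $T^\ast V_i$. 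Since $\dim F_i \geq 1$, any top-degree $7$-form on $U_i$ must involve at least one $T^\ast F_i$ direction, so $(s^\ast\psi)_{[7]}$ vanishes on $U_i$.

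Combining both cases gives $(s^\ast\psi)_{[7]}=0$ pointwise on $M$, hence $\int_M s^\ast\psi = 0$ and the proposition follows from \eqref{eq:nubarexp}. The only subtle point is the bookkeeping in the third paragraph: one must verify that the graded-commutative structure in which the Mathai--Quillen exponential is defined, together with the Berezin integral and the $t$-integration, genuinely preserves the absence of $T^\ast F_i$ factors throughout the construction. I do not expect a serious obstacle beyond this, since the product-decomposition arguments are formally analogous to the vanishing of the top-degree Pontryagin and $\widehat{A}$ forms that underlies Proposition \ref{prop:flatprod}.
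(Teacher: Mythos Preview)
Your proposal is correct and follows essentially the same approach as the paper. The paper streamlines the argument on $U_i$ by reducing to the case $F_i=(I,dt^2)$ and phrasing the vanishing via contraction with the vector field $v=\partial/\partial t$: since $\iota_v R^{SM}=0$ and $\nabla_v^{SM}s=0$, contraction with $v$ passes through the exponential, the Berezin integral, and the $t$-integration, giving $\iota_v(s^\ast\psi)=0$ directly and hence $(s^\ast\psi)_{[7]}|_{U_i}=0$. This is exactly the ``bookkeeping'' you flag as the only subtle point, handled cleanly by the derivation property of $\iota_v$; your product-decomposition version reaches the same conclusion by the same mechanism.
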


\begin{proof}
    For simplicity we assume that the open covering $\{U_i\}$ consists only of $U$, and that $(F,g_F)$ is isometric to an interval $(I,dt^2)$. The computation is local on $M$ and the general case easily follows. We let $v\in\Gamma(TM|_U)$ be the vector field on $U$ induced by $\partial/\partial t$. Thus $(M,g,s)$ has $\nabla^{SM}s=0$ on $M\setminus U$, and $\nabla_v^{SM} s=0$ on $U$. By \eqref{eq:nubarexp} it suffices to show $\int_M s^\ast\psi(\nabla^{SM},g^{SM})=0$. First,
    \[
        \int_{M\setminus U} s^\ast\psi(\nabla^{SM},g^{SM})=0
    \]
    because $\smash{s^\ast\psi(\nabla^{SM},g^{SM})|_{M\setminus U}=0}$, as explained in \cite[Lemma 1.3]{cgn}. The argument is as follows. In the expression \eqref{eq:mq}, the terms $\smash{R^{SM}}$, $\smash{\nabla^{SM}s}$ and $\smash{\| s\|^2}$ have their degrees with respect to $\smash{\Lambda^\ast SM}$ given by $2$, $1$ and $0$, respectively. As $\smash{\nabla^{SM}s=0}$ on $M\setminus U$, it follows that the exponential in \eqref{eq:mq} is of even degree. This is multiplied by $s$, yielding an expression of odd degree in $\Lambda^\ast SM$. As $\text{rank}(SM)=8$, the Berezin integral vanishes, implying $s^\ast\psi(\nabla^{SM},g^{SM})|_{M\setminus U}=0$.
    
    Next, we similarly claim that the Mathai--Quillen term vanishes over $U$:
    \begin{equation}
        \int_{U} s^\ast\psi(\nabla^{SM},g^{SM})=0\label{eq:onu}
    \end{equation}
    In contrast to the argument above, it is no longer necessarily true that $s^\ast\psi(\nabla^{SM},g^{SM})|_{U}=0$. However, as the contraction of $\smash{R^{SM}}$ with $v$ is zero, and $\smash{\nabla_v^{SM}s=0}$, from \eqref{eq:mq} we easily see that the contraction of the differential form $s^\ast \psi(\nabla^{SM},g^{SM})$ with $v$ is zero. This implies that the top degree term of this differential form vanishes on $U$, from which \eqref{eq:onu} follows.
\end{proof}

The manifolds $(M,g^t)$ defined by Joyce with closed $G_2$-structures $\phi^t$, obtained by resolving an orbifold $T^7/\Gamma$ satisfying Hypothesis \ref{hyp2}, are torsion-free up to flat factors. 

\begin{corollary}\label{cor:totors}
    Let $(M,g^t)$ for $t\in (0,\epsilon]$ be a resolution of a flat $G_2$-orbifold $T^7/\Gamma$ satisfying Hypothesis \ref{hyp2}, with closed $G_2$-structure $\phi^t$ as defined by Joyce. Then
    \begin{equation}
    \overline{\nu}(M,\phi_t) = 3\eta(B_{(M,g^t)}) - 24 \eta(D_{(M,g^t)}).\label{eq:interm}
\end{equation}
If $(M,g)$ is a torsion-free $G_2$-manifold obtained from $(M,g^t,\phi^t)$ for $t\ll \epsilon$ then $\nu(M,g) \equiv \nu(M,\phi_t)$.
\end{corollary}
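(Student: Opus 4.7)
The plan is to derive the first equality from Proposition \ref{prop:flatfactorphi} by showing that the Joyce triple $(M,g^t,s^t)$, with $s^t$ the unit spinor corresponding to $\phi^t$, is torsion-free up to flat factors. The second statement then follows from the homotopy invariance of $\nu$.

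First I would unpack Joyce's construction of $\phi^t$ piece by piece. On the orbifold piece $M^\circ\subset M$, the $G_2$-structure is the restriction of the flat torsion-free structure pulled back from $T^7$, so $s^t$ is $\nabla^{SM}$-parallel there. On the interior of each resolution piece $M_i^\circ$, away from the gluing collar, the $G_2$-structure is the Riemannian product of the flat $G_2$-structure on $F_i$ with the torsion-free Calabi-Yau structure on $X_i$, and $s^t$ is again parallel. The only region where $s^t$ may fail to be parallel is a collar neighborhood $U$ of $N=\bigcup_i N_i$, where a partition of unity interpolates between the two structures.

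Next I would examine this gluing collar. As already observed in Section \ref{sec:oddsign}, the metric $g^t$ on $M_i^\circ$ is a local Riemannian product of metrics on $F_i$ and on $X_i$; in particular, on a collar neighborhood of $N_i$, the partition of unity depends only on the coordinate transverse to $F_i$, and both structures being interpolated are products with the same flat factor $F_i$. Consequently $U$ is locally isometric to $F_i\times V_i$ with $(F_i,g_{F_i})$ flat of positive dimension, and the interpolated spinor $s^t$ remains parallel in the $F_i$-direction. This is precisely the definition of torsion-free up to flat factors, so Proposition \ref{prop:flatfactorphi} yields \eqref{eq:interm}. For the second assertion, Joyce's theorem guarantees that for $t\ll \epsilon$ the closed $G_2$-structure $\phi^t$ can be deformed to a torsion-free $G_2$-structure $\phi$ on $M$ through a continuous path of $G_2$-structures; the torsion-free metric $g$ is the one induced by $\phi$. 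Since $\nu$ is invariant under homotopies of $G_2$-structures, $\nu(M,g)=\nu(M,\phi)=\nu(M,\phi^t)$.

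The main obstacle is checking that Joyce's partition-of-unity construction respects the flat-factor product structure along $F_i$ \emph{simultaneously} in the metric and in the spinor, on the same open set $U$. This is strongly suggested by the local-product description of $g^t|_{M_i^\circ}$ already invoked earlier in the paper, but needs to be verified explicitly for $s^t$: namely that the spinor interpolation also takes place only in the radial direction transverse to $F_i$. Once this is confirmed the remainder of the argument is routine, so I expect this to be an inspection of Joyce's recipe rather than a substantive analytic difficulty.
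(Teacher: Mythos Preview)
Your proposal is correct and follows essentially the same route as the paper: apply Proposition~\ref{prop:flatfactorphi} after checking that Joyce's $(M,g^t,\phi^t)$ is torsion-free up to flat factors, then use homotopy invariance of $\nu$. The only difference in emphasis is that the paper makes the homotopy explicit via Joyce's $C^0$-estimate $\|\phi^t-\phi\|_{C^0}\leqslant Kt^{1/2}$ (so the straight-line path stays among $G_2$-structures for small $t$), whereas you assert the existence of a continuous path directly; your verification that the gluing collar carries a flat $F_i$-factor in both metric and spinor is more detailed than the paper's one-line observation.
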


\begin{proof}
    Equation \eqref{eq:interm} follows from Proposition \ref{prop:flatfactorphi} and the observation that $(M,g^t,\phi^t)$ is torsion-free up to flat factors. The $G_2$-structures $\phi^t$ for $t\in (0,\epsilon]$ are all homotopic, with homotopies given by the parameter $t$. As the $\nu$-invariant is invariant under homotopies of $G_2$-structures, $\nu(M,\phi^t)$ is independent of $t\in (0,\epsilon]$. For small enough $t$, there exists a torsion-free $G_2$-structure $(M,g,\phi)$ with $\| \phi^t-\phi\|_{C^0} \leqslant K t^{1/2}$ for some constant $K$ independent of $t$, see \cite[Section 11.6]{joyce-book}. Thus for small enough $t$, $\phi$ is homotopic to $\phi^t$, and hence $\nu(M,\phi) \equiv \nu(M,\phi_t)$.
\end{proof}

\begin{proof}[Proof of Theorem \ref{thm:nuorb}]
    Combine Corollary \ref{cor:totors}, equation \eqref{eq:unbartobar}, Propositions \ref{prop:etaequal} and \ref{prop:diracetaequal}, and the following well-known observation, which has already been mentioned above: for a closed Riemannian manifold $M$ with holonomy contained in $G_2$, we have $h(D_M)=1+b_1(M)$. This is because the $G_2$-structure induces an identification of the spinor bundle with $\underline{\R}\oplus T^\ast M$, and for closed Ricci-flat manifolds, 1-forms are parallel if and only if they are harmonic. This readily adapts to the orbifold setting, so that $h(D_\cO)=1+b_1(\cO)$.
\end{proof}

\noindent In fact, from Propositions \ref{prop:diracetaequal} and \ref{prop:diracetaequal2} we have the following extension.

\begin{prop}\label{prop:nuorbext}
    Under Hypothesis \ref{hyp2} with spin isometries, the congruence \eqref{eq:nuthm} of Theorem \ref{thm:nuorb} holds modulo 24. In Situation \ref{sit}, Theorem \ref{thm:nuorb} continues to hold, i.e. \eqref{eq:nuthm} holds modulo 48.
\end{prop}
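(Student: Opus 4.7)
The plan is to run the proof of Theorem \ref{thm:nuorb} again, but swapping the full Dirac comparison from Proposition \ref{prop:diracetaequal} for one of its weaker variants, and then tracking what remains after multiplication by the coefficient 24.

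First I would observe that none of the inputs to the proof of Theorem \ref{thm:nuorb} apart from the Dirac $\xi$ comparison required the full strength of Hypothesis \ref{hyp}: Corollary \ref{cor:totors} and Proposition \ref{prop:etaequal} are already established under Hypothesis \ref{hyp2}, and the $G_2$-holonomy identification of spinors with $\underline{\R}\oplus T^*M$ gives $h(D_M)=1+b_1(M)$ together with its orbifold analogue $h(D_\cO) = 1+b_1(\cO)$. In Joyce's construction the resolution pieces $M_i = (F_i\times X_i)/B_i$ add only torus factors and simply-connected ALE/QALE pieces, so Mayer--Vietoris applied to the decomposition \eqref{eq:mdecomp} yields $b_1(M) = b_1(\cO)$, and hence $h(D_M) = h(D_\cO)$. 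Combining \eqref{eq:unbartobar} with Corollary \ref{cor:totors} and Proposition \ref{prop:etaequal} then produces, under Hypothesis \ref{hyp2} alone,
\[
    \nu(M,\phi) \equiv 3\eta(B_{(\cO,g^0)}) - 24\eta(D_{(M,g^t)}) + 24(1+b_1(\cO)) \mod 48.
\]

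It remains to compare $24\eta(D_{(M,g^t)})$ with $24\eta(D_{(\cO,g^0)})$. Using $\xi(D) = \tfrac12(\eta(D)+h(D))$ together with $h(D_M)=h(D_\cO)$, the congruence $\xi(D_{(M,g^t)}) \equiv \xi(D_{(\cO,g^0)})$ mod $\tfrac12\Z$ translates to $\eta(D_{(M,g^t)}) \equiv \eta(D_{(\cO,g^0)})$ mod $\Z$, while the stronger mod $\Z$ congruence translates to a congruence mod $2\Z$. Multiplying by 24 then gives agreement mod 24 and mod 48, respectively. Applying Proposition \ref{prop:diracetaequal} in the first setting and Proposition \ref{prop:diracetaequal2} in the second, and substituting into the displayed formula for $\nu(M,\phi)$, yields the two claimed congruences.

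There is no real obstacle here; the argument is a pure bookkeeping exercise built on the propositions already proved. The only point requiring a moment of care is confirming $h(D_M) = h(D_\cO)$, which is precisely what allows each $\xi$ congruence to pass cleanly to an $\eta$ congruence at half the denominator before being scaled by 24.
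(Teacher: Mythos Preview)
Your approach is the paper's: rerun the proof of Theorem \ref{thm:nuorb}, swapping the full Dirac comparison for its weaker variants in Propositions \ref{prop:diracetaequal} and \ref{prop:diracetaequal2}. There is, however, a small but genuine bookkeeping slip. The identity $h(D_M)=1+b_1(M)$ you invoke holds for the torsion-free metric $g$, yet the $\eta$-invariants in your displayed formula are taken with respect to Joyce's intermediate metric $g^t$. Equation \eqref{eq:unbartobar} applied with $g^t$ contributes $24\,h(D_{(M,g^t)})$, and nothing you have written establishes $h(D_{(M,g^t)})=1+b_1(M)$: the metric $g^t$ is not torsion-free, so the parallel-spinor argument does not apply. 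The same issue reappears when you convert the $\xi$ congruence to an $\eta$ congruence; that step needs $h(D_{(M,g^t)})=h(D_{\cO})$, not merely $h(D_{(M,g)})=h(D_{\cO})$.

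The clean fix is to avoid separating $\eta$ and $h$. Since $-24\eta(D)+24h(D)=-48\xi(D)+48h(D)\equiv -48\xi(D)\pmod{48}$, combining \eqref{eq:unbartobar}, Corollary \ref{cor:totors}, and Proposition \ref{prop:etaequal} gives directly
\[
    \nu(M,\phi)\equiv 3\eta(B_{\cO}) - 48\,\xi(D_{(M,g^t)}) \pmod{48}.
\]
Propositions \ref{prop:diracetaequal} and \ref{prop:diracetaequal2} then give $\xi(D_{(M,g^t)})\equiv \xi(D_{\cO})$ modulo $\tfrac12\Z$ or $\Z$, so $48\,\xi(D_{(M,g^t)})\equiv 48\,\xi(D_{\cO})$ modulo $24$ or $48$; unpacking $-48\,\xi(D_{\cO})\equiv -24\eta(D_{\cO})+24(1+b_1(\cO))$ via $h(D_{\cO})=1+b_1(\cO)$ finishes the argument. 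In particular the Mayer--Vietoris step $b_1(M)=b_1(\cO)$, while true, is not needed.
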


\section{Computations}\label{sec:comps}

We now use Theorem \ref{thm:nuorb} and its extensions to compute $\nu$ for many of Joyce's $G_2$-manifolds. Let $\cO=T^7/\Gamma$ be a flat $G_2$-orbifold. To compute $\nu$ using \eqref{eq:nuthm} it suffices to compute $\eta(B_\cO)$ and $\eta(D_\cO)$ (mod $2\Z$), which are averages of the equivariant invariants $\eta_\gamma(B_{T^7})$ and $\eta_\gamma(D_{T^7})$, as seen in \eqref{eq:equiveta}. A summary of our computations is given by Figure \ref{figure:nu}.

\subsection{Examples with vanishing $\eta$-invariants}

The first class of orbifolds considered by Joyce in \cite{joyce-g2} are as follows. Let $T^7=\R^7/\Z^7$. This has a flat $G_2$-structure induced by the 3-form
\begin{equation}
    \phi = dx_{127} + dx_{136} + dx_{145} +dx_{235} - dx_{246} + dx_{347} + dx_{567} \label{eq:phi}
\end{equation}
where $x_i$ are coordinates on $\R^7$ and $dx_{ijk}=dx_idx_j dx_k$. Let $\alpha$, $\beta$, $\gamma$ be the involutions
\begin{align*}
    \alpha(x_1,\ldots,x_7) & = (-x_1,-x_2,-x_3,-x_4,x_5,x_6,x_7)\\
    \beta(x_1,\ldots,x_7) & = (b_1-x_1,b_2-x_2,x_3,x_4,-x_5,-x_6,x_7)\\
    \gamma(x_1,\ldots,x_7) &= (c_1-x_1,x_2,c_3-x_3,x_4,c_5-x_5,x_6,-x_7)
\end{align*}
where $b_1$, $b_2$, $c_1$, $c_3$, $c_5 \in \{0,1/2\}$ are fixed. These involutions preserve $\phi$ and generate a group isomorphic to $(\Z/2)^3$. Examples 1--5 of \cite{joyce-g2} are obtained from resolutions of $\cO=T^7/\Gamma$ for subgroups $\Gamma \subset \langle \alpha,\beta,\gamma\rangle$ with $b_1$, $b_2$, $c_1$, $c_3$, $c_5$ fixed constants. The orientation-reversing isometry $(x_1,\ldots,x_7)\mapsto (-x_1,\ldots,-x_7)$ commutes with $\alpha$, $\beta$, $\gamma$ and reflects the eigenspaces of the Dirac operator. Thus $\eta_g(B_{T^7})=\eta_g(D_{T^7})=0$ for all $g\in \langle \alpha,\beta,\gamma\rangle$, implying $\eta(B_\cO)=\eta(D_\cO)=0$. Example 6 of \cite{joyce-g2} includes a generator sending $(x_1,\ldots,x_7)$ to $( \frac{1}{2} + x_1, x_2, \frac{1}{2} +x_3, \frac{1}{2}+x_4, \frac{1}{2}
+x_5, x_6, x_7)$; this also commutes with the above reflection, so the same holds in this case. 

For each of these examples, the singular set of $\cO$ is a disjoint union of $T^3$ and $T^3/\Z/2$ where $\Z/2$ acts by $(y_1,y_2,y_3)\mapsto (\frac{1}{2}+y_1,-y_2,-y_3)$. Thus Hypothesis \ref{hyp2} with spin isometries is satisfied, and by Proposition \ref{prop:nuorbext}, we conclude that Examples 1--6 of \cite{joyce-g2} have $\nu\equiv 0$ (mod 24).

We next consider $\nu$ (mod $48$). Each neighborhood of $T^3$ is resolved using an Eguchi--Hanson space, and each neighborhood of $T^3/\Z/2$ is resolved using an Eguchi--Hanson space with $\Z/2$-action, of which there are two choices. Let $\ell$ be the number of resolutions of one of these distinguished choices. If $\ell$ is even, we are in Situation \ref{sit}, and by Proposition \ref{prop:nuorbext}, we conclude that Examples 1--6 \cite{joyce-g2} with $\ell \equiv 0$ (mod 2) have $\nu\equiv 24(1+b_1)$ (mod 48). Note that all examples have $b_1=0$ except for Examples 1 and 2, which have $b_1=3$ and $b_1=1$, respectively; these two examples have holonomy groups strictly smaller than $G_2$.

\subsection{Donnelly's formula for $\eta(B_\cO)$}

Before proceeding to the next examples, we make some remarks. The torus $T^7=\R^7/\Lambda$ admits an orientation-reversing spin isometry, induced by negation on $\R^7$, so $\eta(B_{T^7})=\eta(D_{T^7})=0$. Next, suppose $\gamma\in \Gamma$ has non-empty, proper fixed point set $F$. Then $F$ is isometric to either a 1-torus or a 3-torus. In each of the examples we consider, it is easy to find an orientation-reversing isometry of $F$ that extends to $T^7$ which commutes with $\gamma$. Thus we may write
\[
    \eta(B_\cO) = \frac{1}{|\Gamma|}\sum_{\substack{\gamma\in\Gamma\\ \text{Fix}(\gamma)=\emptyset}} \eta_\gamma(B_{T^7})
\]
(In our examples this follows also from direct computation.) To compute each term $\eta_\gamma(B_{T^7})$ for our next set of examples, we describe a situation considered by Donnelly \cite{donnelly}. Assume $T^7$ is isometric to $T^6\times S^1$, i.e. the lattice $\Lambda\subset \R^7$ defining $T^7=\R^7/\Lambda$ splits off an orthogonal rank 1 summand, so that $\Lambda = \Lambda'\oplus \Z$ for some rank 6 lattice $\Lambda'\subset \R^6$. Suppose for $\gamma\in \Gamma$ we have
\begin{equation}
    \gamma(x,y) = (Ax+c,y+d)\label{eq:gammatorus}
\end{equation}
where $y\in S^1=\R/\Z$ and $x\in T^6$. Here $A$ acts linearly and orthogonally on $\R^6$ preserving $\Lambda'$, conjugate to the direct sum of three rotation matrices with angles $\gamma_1,\gamma_2,\gamma_3$, while $c\in T^6$ and $d\in S^1$. If $A$ has eigenvalues $\pm 1$ or $d=0\in S^1$ then $\eta_\gamma(B_{T^7})=0$. Otherwise we have 
\begin{equation*}
    \eta_\gamma(B_{T^7}) = \nu(\gamma)\cot(\pi d )\cot(\gamma_1/2)\cot(\gamma_2/2)\cot(\gamma_3/2)\label{eq:donodd1}
\end{equation*}
where $\nu(\gamma)$ is the number of fixed points of the extension of $\gamma$ to $T^6\times D^2$. This result is only a slight extension of \cite[Proposition 4.7]{donnelly}, and follows by applying the equivariant Atiyah--Patodi--Singer theorem to $\gamma$ acting on $T^6\times D^2$. Then for $\cO=T^7/\Lambda$ we may write
\begin{equation}
    \eta(B_{\cO}) = \frac{1}{|\Gamma|}\sum_{\substack{\gamma\in\Gamma\\ \text{Fix}(\gamma)=\emptyset}} \nu(\gamma)\cot(\pi d)\prod_{i=1}^3\cot(\gamma_i/2).\label{eq:donodd}
\end{equation}
Donnelly's method may be adapted to compute spin Dirac eta invariants, modulo $2\Z$, but not necessarily for the spin structure we want. We thus describe a more direct method which in addition computes the real number $\eta(D_\cO)$.

\subsection{Direct computations for $\eta(D_\cO)$}\label{sec:direct}

Spinors on $\cO$ are $\Gamma$-invariant spinors on $T^7$, which in turn are $\Lambda$-invariant spinors on $\R^7$. Following an observation which goes back to Friedrich \cite{friedrich}, the determination of eigenspinors for $\cO$ quickly becomes representation-theoretic. A direct computation of $\eta(D_\cO)$ in this fashion is nearly contained in \cite{mp-dirac}, which considers the case in which $\cO$ is smooth, but the arguments carry over to the orbifold case without difficulty. Further, as we are not considering a twisted Dirac operator, and our spin structures are naturally induced by $G_2$-structures, our situation is considerably simpler. We review the key aspects, leaving only a few details to \cite{mp-dirac}.

First, we describe eigenspaces of the Dirac operator acting on $T^7=\R^7/\Lambda$. Let $S$ denote the 8-dimensional complex spinor representation of $\text{Spin}(7)$, equipped with a Hermitian inner product $\langle \cdot , \cdot \rangle$ for which Clifford multiplication is skew-Hermitian. For $u\in\Lambda^\ast$ and $w\in S$ consider the spinor $f_{u,w}:T^7\to S$ given by $f_{u,w}(x) = e^{2\pi i \langle u, x\rangle}w$. Write $D=D_{T^7}$ for the Dirac operator. Then
\[
    D f_{u,w}(x) = \sum_{j=1}^7 e_j \cdot \frac{\partial}{\partial x_j} f_{u,w}(x) = 2\pi i u\cdot f_{u,w}(x).
\]
The Clifford relation $u^2=-|u|^2$ implies Clifford multiplication on $S$ by $u$ has eigenvalues $\pm i |u|$. Write $S^\pm_u$ for the $\mp i |u|$-eigenspaces. Then for $w\in S^\pm_u$ we have $D f_{u,w} =  \pm 2\pi |u| f_{u,w}$. By the Stone--Weierstrass Theorem, the spinors $f_{u,w}$ for $u\in\Lambda^\ast$ and $w$ ranging over bases of $S_u^\pm$ give a complete orthogonal system of $L^2(T^7; S)$. Thus the eigenspaces of $D$ are $E_{\pm\mu}$ where
\[
    E_{\pm\mu} =\{ f_{u,w}: u\in \Lambda^\ast, \mu=2\pi|u|, w\in S^{\pm}_u\}.
\]
In fact, as the model case with $u$ a multiple of $e_1$ shows, $S_u^+$ and $S_u^-$ both have dimension $4$ when $u\neq 0$, and we recover the fact that the spectrum of $D$ is symmetric.

Now we turn to $\cO=T^7/\Gamma$. Because the action of $\Gamma$ on $T^7$ respects its $G_2$-structure, we have an induced action on $S$, to be described shortly, and a resulting action on spinors $f$ defined by $(\gamma f)(x) = \gamma f(\gamma^{-1}x)$. Write $\Lambda_\mu^\ast=\{u\in \Lambda^\ast: \mu=2\pi |u|\}$ and $E_u^\pm=\{f_{u,w}: w\in S_u^\pm\}$ so that $E_{\pm \mu}=\oplus_{u\in\Lambda^\ast_\mu}E_u^\pm$. Fix $u$ and let $s^\pm_k$ be an orthonormal basis for $S_u^\pm$. Then for $\gamma\in\Gamma$ we compute
\begin{equation}
    \text{Tr}(\gamma|_{E_{u}^\pm})   \text{vol}(T^7) =\sum_{k} \langle \gamma f_{u,s_k^\pm}, f_{u,s_k^\pm}\rangle_{L^2} = \sum_k \langle\gamma s_k^\pm,  s_k^\pm\rangle \int_{T^7}  e^{2\pi i \langle u, \gamma^{-1}x\rangle - 2\pi i \langle u, x\rangle } \label{eq:tr1}
\end{equation}
Write $\gamma x = B x + b$ where $B$ is a linear map and $b\in T^7$. Noting $\gamma^{-1}x=B^{-1}x-B^{-1}b$, the integral on the right hand side of \eqref{eq:tr1} is equal to $\delta_{Bu,u}\text{vol}(T^7)e^{-2\pi i \langle u, b \rangle}$. From this we obtain
\[
    \text{Tr}(\gamma|_{E_{\pm\mu}}) = \sum_{u\in (\Lambda^\ast_\mu)^B}\text{Tr}(\gamma|_{E_{u}^\pm}) =  \sum_{u\in (\Lambda^\ast_\mu)^B}e^{-2\pi i \langle u, b \rangle}\text{Tr}(\gamma|_{S_{u}^\pm})
\]
Now we focus on $\smash{\text{Tr}(\gamma|_{S_{u}^\pm})}$. We begin by describing $S$ more explicitly. First, we recall that the 3-form $\phi$ defining the $G_2$-structure on $\R^7$ determines a cross-product $(u,v)\mapsto u\times v$ via the relation $\phi(u,v,w)=\langle u\times v, w\rangle$. We extend the cross-product complex linearly to $\C^7$. We then define $S=(\R\oplus \R^7) \otimes \C$, and Clifford multiplication for $u\in \R^7$ and $(\lambda,v)\in \C\oplus \C^7=S$ by
\[
    u\cdot (\lambda, v) = (-\langle v, u\rangle, \lambda u + u\times v),
\]
where $\langle \cdot, \cdot\rangle$ is the standard Hermitian inner product on $S=\C^8$; compare \cite[Section 10.2]{ws}. Let $e_1,\ldots,e_7$ be the standard basis for $\R^7$. With $\phi$ as in \eqref{eq:phi}, we compute that
\begin{align}
    s^\pm_0=(1, \pm i e_7)/\sqrt{2}, \quad  & s_1^\pm=(0, e_1 \pm i e_2)/\sqrt{2},\quad \label{eq:ieigen}\\ s_2^\pm=(0, e_3 \pm i e_4)/\sqrt{2},\quad & s_3^\pm=(0, e_5 \pm i e_6)/\sqrt{2} \nonumber
\end{align}
form an orthonormal basis for $S^\pm_{u}$ for $u\neq 0$ a real positive multiple of $e_7$. If $u$ is a {\emph{negative}} multiple of $e_7$ then these form bases for $S^{\mp}_u$. Note that $\gamma\in \Gamma$ acts on $S$ by $\gamma(\lambda,v)=(\lambda,B v)$. The rotation part of $\gamma$, written here as $B$, may be conjugated in $\text{Spin}(7)$ to an element $B'$ that fixes $e_7$ and rotates the $(e_k,e_{k+1})$ plane by an angle $\gamma_k$ for $k\in\{1,2,3\}$, where $\gamma_1+\gamma_2+\gamma_3\equiv 0 $ (mod $2\pi$). Also suppose that $u$, after the conjugation, is a real multiple of $e_7$, and let $\varepsilon_u = \text{sign}(u/e_7)\in\{\pm 1\}$. Then $B's_0^\pm = s_0^\pm$ and $B' s_k^\pm=e^{\mp i \gamma_k} s_k^\pm$ for $k\in\{1,2,3\}$, so that
\[
    \text{Tr}(\gamma|_{S_u^+})-\text{Tr}(\gamma|_{S_u^-}) =  \varepsilon_u \sum_{k=1}^3 e^{-i \gamma_k}-e^{i \gamma_k} = -2i\varepsilon_u\sum_{k=1}^3 \sin(\gamma_k).
\]
Recalling that $\eta_\gamma(D_{T^7})(s)=\sum_{\pm\mu\neq 0} \pm\text{Tr}(\gamma|_{E_{\pm \mu}})|\mu|^{-s}$ we obtain the formula
\begin{equation}
    \eta_\gamma(D_{T^7})(s) = -2i(2\pi)^{-s}\sum_{u\in (\Lambda^\ast\setminus 0)^B} |u|^{-s}e^{-2\pi i\langle u,b\rangle}\varepsilon_u \sum_{k=1}^3 \sin(\gamma_k).\label{eq:eqetadir}
\end{equation}
In our next set of examples, every $u\in (\Lambda^\ast\setminus 0)^B$ for $\gamma$ with $\text{Fix}(\gamma)=\emptyset$ is an integral multiple of $e_7$, and this formula simplifies considerably. Note that $\varepsilon_{-u}=-\varepsilon_u$. We also mention that the same approach described here may be used to compute the odd signature $\eta$-invariant.

\subsection{Dihedral examples}\label{sec:dihedral}

We now consider Examples 7--14 of \cite{joyce-g2}. The general setup is as follows. Let $z_1,z_2,z_3$ be coordinates for $\C^3$ and $x$ for $\R$. Let $\Lambda'\subset \C^3$ be a rank 6 lattice. Then the 7-torus $T^7=\C^3\times \R/\Lambda'\times \Z$ has a flat $G_2$-structure induced by the 3-form
\[
    \phi = \omega\wedge dx + \text{Im}(\Omega)
\]
where $\omega = \frac{i}{2}\sum_{k=1}^3 dz_k d\overline{z}_k$ and $\Omega = dz_1dz_2dz_3$. Let $u$ and $v$ be complex roots of unity, and $a$ the smallest positive integer such that $u^a=v^a=1$. Let $\alpha$ and $\beta$ be the isometries of $\C^3\times \R$ defined by
\begin{align*}
    \alpha(z_1,z_2,z_3,x) & = (uz_1,vz_2,\overline{uv}z_3,x+\tfrac{1}{a})\\
    \beta(z_1,z_2,z_3,x) & = (-\overline{z}_1,-\overline{z}_2,-\overline{z}_3,-x)
\end{align*}
If $\alpha$ and $\beta$ preserve $\Lambda'$, they descend to isometries on $T^7$ that preserve $\phi$. They satisfy $\alpha^a=\beta^2=1$ and $\alpha\beta=\beta\alpha^{-1}$, and thus generate a dihedral group of order $2a$:
\begin{equation}
    \Gamma := \langle \alpha,\beta \rangle = \left\{1,\alpha,\alpha^2,\ldots,\alpha^{a-1},\beta,\beta\alpha,\beta\alpha^2,\ldots,\beta\alpha^{a-1} \right\}\label{eq:dihedralgroup}
\end{equation}
The elements $\beta\alpha^j$ all have nonempty fixed point sets, while $\alpha^j$ for $j\not\equiv 0$ (mod $a$) has no fixed points. If we write $\alpha$ as in \eqref{eq:gammatorus}, then $c=0$ and $d=1/a$, and the rotation angles $\theta_1,\theta_2,\theta_3$ of $A$ are determined by $u=e^{i\theta_1}$, $v=e^{i\theta_2}$ and $\theta_1+\theta_2+\theta_3\equiv 0$ (mod $2\pi$). Upon computing $\smash{\nu(\alpha^j)=\det(1-A^j)=64\prod_{k=1}^3 \sin^2(j\theta_k/2)}$, from \eqref{eq:donodd} we obtain
\begin{align}
        \eta(B_\cO)
    & = \frac{4}{a}\sum_{j=1}^{a-1}\cot(j\pi/a)\prod_{k=1}^3 \sin(j\theta_k) \nonumber \\
     & = -\frac{1}{a}\sum_{j=1}^{a-1} \cot(j\pi/a)\sum_{k=1}^3 \sin(2j\theta_k) = 2 \sum_{k=1}^3 ((\theta_k/\pi)).\label{eq:etasignalmost}
\end{align}
The second equality follows from the elementary identity $-4\prod_{k=1}^3\sin(\theta_k)=\sum_{k=1}^3 \sin(2\theta_k)$, which holds whenever $\theta_1+\theta_2+\theta_3\equiv 0 $ (mod $2\pi$), and the third equality follows from the classical identity of Eisenstein \cite{eisenstein}, which says that $-\frac{1}{2a}\sum_{j=1}^{a-1}\cot(\pi j /a)\sin(2\pi jx/a)=((x/a))$ where $((t))=t-[t]-1/2$ if $t\not\in\Z$ and $((t))=0$ otherwise. Let us normalize the angles $\theta_j$ such that $\theta_j\in [0,2\pi)$ for each $j$. Then equation \eqref{eq:etasignalmost} yields
\begin{equation}
        \eta(B_\cO)
     = \begin{cases} +1, & \theta_i < \pi \text{ for } i=1,2,3\\ -1, & \theta_i > \pi \text{ for some } i\\
    \phantom{+} 0, & \theta_i\in \{0,\pi\} \text{ for some }i\end{cases} \label{eq:donoddex}
\end{equation}
We now turn to the Dirac eta invariants. In \eqref{eq:eqetadir}, when $\gamma=\alpha^j$ every $u\in(\Lambda^\ast)^B$ is an integer multiple of $e_7$. We sum over $u=\pm n e_7$ with $n\in \Z_{>0}$ and $\varepsilon_{u}=\pm 1$ to obtain 
\begin{equation}
    \eta_{\alpha^j}(D_{T^7})(s) = -4 (2\pi)^{-s} \sum_{n=1}^\infty \frac{1}{n^s} \sin(2\pi jn/a) \sum_{k=1}^3 \sin(j\theta_k).\label{eq:etazeta1}
\end{equation}
The function $\sum_{n=1}^\infty \sin(2\pi jn/a)n^{-s}$ is the imaginary part of the polylogarithm $L_s(z)=\sum_{n=1}^\infty z^n/n^s$ evaluated at $z=e^{2\pi i j/a}$. Using the identity $L_0(z)=z/(1-z)$, we evaluate \eqref{eq:etazeta1} at $s=0$:
\begin{equation*}
    \eta_{\alpha^j}(D_{T^7}) = -2\cot(\pi j /a)\sum_{k=1}^3 \sin(j\theta_k).
\end{equation*}
Finally, taking the average over the equivariant $\eta$-invariants we obtain
\begin{equation}
    \eta(D_\cO) = -\frac{1}{a}\sum_{j=1}^{a-1} \cot( \pi j/a) \sum_{k=1}^3\sin(j\theta_k)=\begin{cases} -1, & \theta_i \not\equiv 0 \;(\text{mod } 2\pi) \text{ for } i=1,2,3\\ \phantom{-}0, & \text{ otherwise}\end{cases} \label{eq:donspinex}
\end{equation}
where the second equality follows again from Eisenstein's identity. We may now compute the $\nu$-invariants of the torsion-free $G_2$-manifolds $(M,\phi)$ as constructed in Examples 7--14 of \cite{joyce-g2}. Each of these satisfies Hypothesis \ref{hyp}, and is either constructed from a dihedral orbifold as just described, or is a finite quotient thereof.\\

\begin{figure}[t]
\centering
\begin{tabular}{l|r|r|r|r|r|r|r}
\hline
 Ex. No. & $|\Gamma|$ & $\pi_1$ & $b_2$  & $b_3$ & $\eta(B_{M})$ & $\overline{\eta}(D_M)$ & $\nu$ (mod $48$) \\
 \hline
 $7$ & $6$ & $0$ & $5$ & $13$ & $1$ & $-1$ & $3$\\
 $8$ & $12$ & $0$ & $3$ & $11$ & $-1$ & $-1$ & $45$\\
 $9$ & $8$ & $0$ & $11$ & $36$ & $0$ & $-1$ & $0$\\
 $10$ & $16$ & $\Z/2$ & $6$ & $21$ & $0$ & $-1$ & $0$\\
  $11$ & $12$ & $0$ & $4$ & $17$ & $0$ & $-1$ & $0$\\
  $12$ & $24$ & $\Z/2$ & $2$ & $11$ & $0$ & $-1$ & $0$\\
  $13$ & $14$ & $0$ & $2$ & $10$ & $-1$ & $-1$ & $45$\\
  $14$ & $18$ & $0$ & $2$ & $10$ & $1/3$ & $-1/3$ & $33$\\
 \hline
\end{tabular}
\captionof{table}[table]{$\nu$ invariants for Examples 7--14 of \cite{joyce-g2}. Here $\overline{\eta}:=\eta$ (mod $2\Z$).}\label{table}
\end{figure}

\noindent {\emph{Example 7 of \cite{joyce-g2}}}. This example has $a=3$, $u=v=e^{2\pi i/3}$ and $\Lambda' = \Z^3\oplus e^{2\pi i/3}\Z^3\subset \C^3$. Thus $\theta_1=\theta_2=\theta_3=2\pi/3$. Then \eqref{eq:donoddex} and \eqref{eq:donspinex} yield $\eta(B_\cO)=1$ and $\eta(D_\cO)=-1$. For the resulting resolution torsion-free $G_2$-manifold $(M,\phi)$ we have $\nu(M,\phi)\equiv 3$ (mod $48$).\\

\noindent {\emph{Example 8 of \cite{joyce-g2}}}. Here $a=6$, $u=v=e^{\pi i/3}$ and $\Lambda'$ is as in Example 7. Thus $\theta_1=\theta_2=\pi/3$ and $\theta_3=4\pi/3$. Then $\eta(B_\cO)=\eta(D_\cO)=-1$, and $\nu(M,\phi)\equiv 45$ (mod $48$).\\

\noindent {\emph{Example 9 of \cite{joyce-g2}}}. Here $a=4$, $u=v=i$ and $\Lambda'=\Z^3\oplus i\Z^3\subset \C^3$. Thus $\theta_1=\theta_2=\pi/2$ and $\theta_3=\pi$. Then $\eta(B_\cO)=0$, $\eta(D_\cO)=-1$, and $\nu(M,\phi)\equiv 0$ (mod $48$).\\

\noindent {\emph{Example 10 of \cite{joyce-g2}}}. This is obtained from Example 9 by incorporating the involution $\gamma$ defined by $(z_1,z_2,z_3,x)\mapsto (z_1,z_2,z_3+\frac{1+i}{2},x)$. Thus the computations are slightly different: when averaging over the equivariant $\eta$-invariants, in addition to the terms associated to $\alpha^j$ are those associated to $\gamma\alpha^j$. It is easily verified that $\eta_{\gamma\alpha^j}(B_{T^7})=\eta_{\alpha^j}(B_{T^7})$, and similarly for the Dirac operator. As the size of our group has doubled, we obtain the same results as in Example 9.\\

\noindent {\emph{Example 11 of \cite{joyce-g2}}}. This example has $a=6$, $u=e^{\pi i/3}$, $v=e^{2\pi i/3}$ with corresponding lattice $\Lambda' = (\Z\oplus e^{2\pi i/3}\Z)\oplus (\Z\oplus e^{2\pi i/3}\Z)\oplus (\Z\oplus i\Z)\subset \C^3$. Thus $\theta_1=\pi/3$, $\theta_2=2\pi/3$, $\theta_3=\pi$. We have $\eta(B_\cO)=0$, $\eta(D_\cO)=-1$ and $\nu(M,\phi)\equiv 0$ (mod $48$).\\

\noindent {\emph{Example 12 of \cite{joyce-g2}}}. This is obtained from Example 11 by incorporating the involution defined in Example 10, and yields the same results as in Example 11, similar to Example 10.\\

\noindent {\emph{Example 13 of \cite{joyce-g2}}}. This has $a=7$, $u=e^{2\pi i/7}$, $v=u^2$ and
\[
    \Lambda' =\langle (u^j,u^{2j},u^{4j})\in \C^3: j=1,2,3,4,5,6\rangle.
\]
Thus $\theta_1=2\pi/7$, $\theta_2=4\pi/7$, $\theta_3=8\pi/7$; thus $\eta(B_\cO)=\eta(D_\cO)= -1$, and $\nu(M,\phi)\equiv 45$ (mod $48$).\\

\noindent {\emph{Example 14 of \cite{joyce-g2}}}. Consider the dihedral group $\langle \alpha,\beta\rangle $ defined in Example 7, and adjoin to it $\gamma(z_1,z_2,z_3,x)=(e^{2\pi i/3}z_1,e^{4\pi i/3}z_2,z_3+i/\sqrt{3},x)$. Then $\alpha$ and $\gamma$ commute, while $\gamma\beta = \beta\gamma^{-1}$. Thus $\Gamma=\langle \alpha,\beta,\gamma\rangle$ is a group of order 18. The elements with fixed points are $\beta\alpha^j\gamma^k$. In addition, the elements $g=\alpha^j\gamma^k$ with $k\in\{1,2\}$ have $\eta_g(B_\cO)=0$ and $\eta_g(D_{T^7})\equiv 0$ (mod $2\Z$), as each such $g$ has more than one $+1$ eigenvalue in its associated rotation matrix $A$ from \eqref{eq:gammatorus}. Thus the computations of the $\eta$-invariants differ from Example 7 only in that $|\Gamma|=18$ instead of $|\Gamma|=6$. We have $\eta(B_\cO)=1/3$ and $\eta(D_\cO)=-1/3$, implying $\nu\equiv 33$ (mod $48$). 

\begin{figure}[t]
\centering
\begin{tikzpicture}
	\begin{axis}[%
	xlabel={$b_3$}, ylabel={$b_2$}, grid, x post scale=2,
	every axis x label/.style={
	at={(ticklabel* cs:1.02)}, below=2.5mm, anchor=west},
	every axis y label/.style={
	at={(ticklabel* cs:1.07)}, left=5.5mm, anchor=west},
	scatter/classes={%
		a={black},
		b={draw=black, fill=white},
		c={fill=green, mark size=2.5}, 
		f={mark=square*,mark size=3, fill=yellow}, 
		d={mark=triangle*, mark size=3.75, fill=cyan}, 
		e={mark=pentagon*, mark size=3.5, fill=red}}] 
	\addplot[scatter,only marks,
		scatter src=explicit symbolic]%
	table[meta=label] {
x     y      label
43	12	a		
47	8	a
46	9	b
45	10	a
44	11	b
43	12	a
42	13	b
41	14	a
40	15	b
39	16	a
35	4	a
34	5	b
33	6	a
32	7	b
31	8	a
30	9	b
29	10	a
28	11	b
27	12	a
29	2	a
28	3	b
27	4	a
26	5	b
25	6	a
24	7	b
23	8	a
22	9	b
21	10	a
22	1	a
21	2	b
20	3	a
19	4	b
18	5	a
17	6	b
16	7	a
19	0	a
18	1	b
17	2	a
16	3	b
15	4	a
14	5	b
13	6	a
15	0	a
14	1	b
13	2	a
12	3	b
11	4	a
13	5	d
11	3	f
36	11	c
21	6	c
17	4	c
11	2	c
10	2	e
17	6	b
18	5	b
6	3	b
7	2	b
7	8	b
4	3	b
	};
\end{axis}
\end{tikzpicture}
\caption{This is a variant of \cite[Table 2]{joyce-g2} listing the betti numbers $b_2$, $b_3$ of all the closed manifolds constructed in \cite{joyce-g2} with holonomy $G_2$. Our computations are: \,\protect\nodea\;: $\nu \equiv 0$ (mod $48$); \,\protect\nodeb\;: $\nu \equiv 0$ (mod $24$); \,\protect\nodec\;: $\nu \equiv 24$ (mod 48); \protect\noded\;: $\nu\equiv 3$ (mod $48$); \,\protect\nodee\;: $\nu\equiv 45$ (mod $48$); \,\protect\nodef\;: there are two $G_2$-manifolds here, with $\nu\equiv 45$ and $\nu \equiv 33$ (mod $48$). Some nodes have more than one manifold; apart from \,\protect\nodef\,, the values of $\nu$ hold for all $G_2$-manifolds from \cite{joyce-g2} at the given node.}\label{figure:nu}
\end{figure}

\subsection{A few more examples}

The remaining examples in \cite{joyce-g2} satisfy Hypothesis \ref{hyp2} with spin isometries, and thus by Proposition \ref{prop:nuorbext} we may compute $\nu$ (mod 24). In fact, the orientation-reversing isometry $(x_1,\ldots,x_7)\mapsto (-x_1,\ldots,-x_7)$ is well-defined on $\cO=T^7/\Lambda$ for each of Exs. 15--18 of \cite{joyce-g2}, and commutes with $\Gamma$ in each case. Thus $\eta_\gamma(B_\cO)=0$ and $\eta_\gamma(D_\cO)\equiv 0$ (mod $\Z$) for each $\gamma\in \Gamma$, the latter holding because the isometry reverses the Dirac spectrum. Thus $\nu \equiv 0$ (mod $24$) for Exs. 15--18 of \cite{joyce-g2}. A summary of all our computations is represented in Figure \ref{figure:nu}.

\section{Twisted connected sum type decompositions}\label{sec:tcs}
Here we explain another route to some of the above computations. The starting point is as follows: suppose we can realize a given Joyce orbifold $\cO=T^7/\Gamma$ as a union
\begin{equation}
    \cO = \cO_{+}\cup \cO_- \label{eq:tcsdecomp}
\end{equation}
where $\cO_{\pm}$ are orbifolds with (smooth) boundaries whose collar neighborhoods are isometric to $T^6\times [-\epsilon,\epsilon]$, and $\cO$ is obtained by gluing along the $T^6$ boundaries. See Figure \ref{fig:orbifolddecomptcs}. This is similar to the twisted connected sum picture, with cross-section $T^6$ replacing $K3\times T^2$. Assume $b_1(\cO)=0$, and that $\cO$ satisfies Hypothesis \ref{hyp} or one of its variations. Then we have shown that the resolution $G_2$-holonomy manifold $(M,g)$ as constructed by Joyce has $\nu$-invariant given by
\[
        \nu(M,\phi_g) \equiv 3\eta(B_{\cO})-24\eta(D_{\cO}) + 24 \mod 48
\]
Instead of computing the terms directly for $\cO$ as before, we may apply gluing formulas for $\eta(B_\cO)$ and $\eta(D_\cO)$ using the decomposition \eqref{eq:tcsdecomp}. We first explain this for the signature operator term.

\begin{figure}[t]
\centering
\begin{tikzpicture}[scale=1]
\node[anchor=south west,inner sep=0] at (0,0) {\includegraphics[scale=0.675]{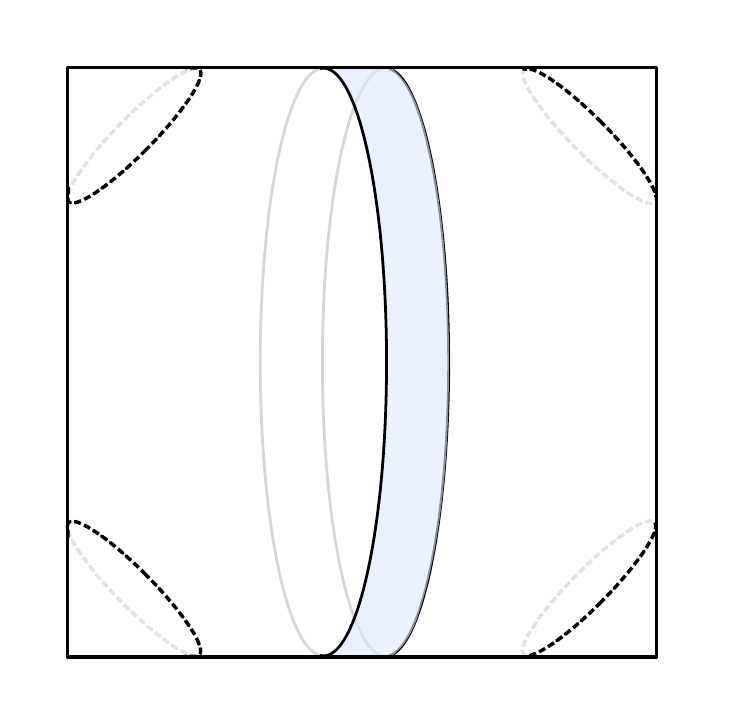}};
\node[label=above right:{$\cO_+$}] at (.75,2){};
\node[label=above right:{$T^6\times [-\epsilon,\epsilon]$}] at (1.4,-.8){};
\draw (2.6,0) -- (2.6,0.75); 
\node[label=above right:{$\cO_-$}] at (3.3,2){};
\end{tikzpicture}
\caption{A twisted connected sum type decomposition of $\cO$.}\label{fig:orbifolddecomptcs}
\end{figure}

Write $L_\pm\subset H^3(T^6)$ for the two Lagrangians given by the images of $H^3(\cO_\pm)\to H^3(T^6)$. Assume $\cO_\pm$ each admit an orientation-reversing isometry. Then Theorem \ref{thm:etaglue} yields
\[
    \eta(B_{\cO}) = m(L_+,L_-;H^3(T^3))
\]
as the relative $\eta$-invariants vanish by spectral symmetry. The Maslov index is computed as follows, see \cite[Section 4.2]{cgn}. Let $A_\pm$ be the isometries of $H^3(T^6)$ which anticommute with the Hodge star whose 1-eigenspaces are $L_\pm$. Let $E_-\subset H^3(T^6;\C)$ be the $(-i)$-eigenspace of the Hodge star. Write the eigenvalues of $-A_+A_-|_{E_{-}}$ as $e^{i\phi_1},\ldots,e^{i\phi_{10}}$ where $\phi_j\in (-\pi,\pi]$. Then:
\begin{equation}
    m(L_+,L_-;H^3(T^6)) = - \sum_{\phi_j\neq \pi} \frac{\phi_j}{\pi} \label{eq:maslovangles}
\end{equation}

We apply this to some of the dihedral examples from Section \ref{sec:dihedral}, so that $\cO=T^7/\Gamma$ where $\Gamma$ is the dihedral group \eqref{eq:dihedralgroup}. Here we have a decomposition as in \eqref{eq:tcsdecomp} with
\[
    \cO_+ = \left(T^6\times [-\tfrac{1}{4a},\tfrac{1}{4a}] \right)/ \beta , \qquad  \cO_- = \left(T^6\times [\tfrac{1}{4a},\tfrac{3}{4a}] \right)/ \alpha\beta 
\]
This was pointed out to the author by Sebastian Goette and Johannes Nordstr\"{o}m. Write the coordinates of $T^6$ as $(z_1,z_2,z_3)$ as in Section \ref{sec:dihedral}. We may identify $H^3(T^6)=H^3(T^6;\R)$ with the Hodge groups $H^{3,0}(T^6)\oplus H^{2,1}(T^6)\subset H^3(T^6;\C)$. Concretely,
\[
    H^3(T^6)= \bigoplus_{jk\ell\in S}\R\cdot \text{Re}(dz_{jk\ell})\oplus \R\cdot\text{Im}(dz_{jk\ell})
\]
where $S=\{123, \overline{1}23,1\overline{2}3,12\overline{3},1\overline{1}2,1\overline{1}3,12\overline{2},2\overline{2}3,13\overline{3},23\overline{3}\}$. The Lagrangian $L_+$ is the subspace of $H^3(T^6)$ invariant under $\beta$. We compute
\[
    L_+ = \bigoplus_{jk\ell\in S}\R\cdot \text{Im}(dz_{jk\ell})
\]
Similarly, $L_-$ is the subspace invariant under $\alpha\beta$. It is spanned by $\text{Im}(dz_{123})$, the 3 elements
\[
    \text{Im}(e^{-i\theta_j}dz_{\overline{j}k\ell})= \cos(\theta_j)\text{Im}(dz_{\overline{j}k\ell})-\sin(\theta_j)\text{Re}(dz_{\overline{j}k\ell})
\]
where $j,k,\ell\in\{1,2,3\}$ are distinct, and the 6 elements
\[ 
    \text{Im}(e^{i\theta_k/2}dz_{j\overline{j}k})=
 \cos(\theta_k/2)\text{Im}(dz_{j\overline{j}k})+\sin(\theta_k/2)\text{Re}(dz_{j\overline{j}k})
\]
where $j,k\in \{1,2,3\}$ are distinct. Note each of $L_\pm$ is 10-dimensional, coherent with the fact that they are Lagrangian subspaces inside the 20-dimensional space $H^3(T^6)$.

Turning to the Maslov index, a computation from the above description shows that the 10 eigenvalues of $-A_+A_-|_{E_{-}}$ are $-1$, $-e^{-2i\theta_j}$, where $j\in \{1,2,3\}$, and 3 conjugate pairs. Thus \eqref{eq:maslovangles} is
\[
    -\sum_{j=1}^3 \frac{\phi_j}{\pi} 
\]
where $\phi_j$ is the value in $(-\pi,\pi]$ equal to $\pi-2\theta_j$ modulo $2\pi$, as long as all $\theta_j\not\in \{0,\pi\}$; otherwise \eqref{eq:maslovangles} is zero. This is the same as expression \eqref{eq:donoddex}, our previous computation of $\eta(B_\cO)$. In particular, we recover $\eta(B_\cO)$ for Examples 7, 8, 9, 11, 13 of \cite{joyce-g2}, previously computed in Table \ref{table}.

The same procedure can be carried out for the spin Dirac invariant $\eta(D_\cO)$. Assume, as is the case in the above examples, that $\cO_\pm$ admit orientation-reversing spin isometries. Then the relative $\eta$-invariants vanish, and the gluing formula \cite[Theorem 1.8]{bunke} yields 
\[
    \eta(D_\cO) \equiv m(S_+,S_-;S_{T^6}) \;\; \text{mod }\Z
\]
Here $S_{T^6}$ is the space of harmonic spinors on the cross-section $T^6$, which may be identified with parallel spinors on $T^6\times (-\epsilon,\epsilon)$. The subspaces $S_\pm$ consist of those spinors that extend to (orbifold) harmonic spinors over $\cO_\pm$. The space $S_{T^6}$ may be identified with the $\text{Spin}(6)$ representation $\R\oplus \R^7$ obtained by restriction from the $\text{Spin}(7)$ representation in Section \ref{sec:direct}. As before, $e_i$ are the standard basis vectors for $\R^7$. Clifford multiplication by $e_7$ plays the role of the Hodge star here. The Lagrangian $S_+$ is the $\beta$-invariant subspace, which is spanned by
\[
    (1,0), \;\; (0,e_2), \;\; (0,e_4), \;\; (0,e_6).
\]
Next, $S_-$ is the $\alpha\beta$-invariant subspace, spanned by $(1,0)\in S_{T^6}=\R\oplus\R^7$ and the 3 elements
\begin{gather*}
(0,-\sin(\theta_1/2)e_1 + \cos(\theta_1/2)e_2),\\ (0,-\sin(\theta_2/2)e_3 + \cos(\theta_2/2)e_4),\\ (0,-\sin(\theta_3/2)e_5 + \cos(\theta_3/2)e_6).
\end{gather*}
The $(-i)$-eigenspace $E_-\subset S_{T^6}\otimes \C$ in this case is spanned by $s_j^+$ for $j\in\{1,2,3,4\}$, from \eqref{eq:ieigen}. The matrix $-A_+A_-|_{E_-}$ has the $4$ eigenvalues $-1$ and $e^{i(\pi-\theta_j)}$ for $j\in\{1,2,3\}$. Then
\[
    m(S_+,S_-;S_{T^6})= -\sum_{j=1}^3 \frac{\pi-\theta_j}{\pi} = -1
\]
as long as the $\theta_j$ are not all in $2\pi \Z$, and otherwise it vanishes. This recovers \eqref{eq:donspinex} modulo $\Z$, although we see that the answers agree as integers as well, which suggests that the mod $\Z$ restriction on the gluing formula may not be neccessary in this setting.

\bibliography{references}
\bibliographystyle{alpha}

\end{document}